\begin{document}
\title{Strongly compact cardinals and ordinal definability}
\author{Gabriel Goldberg}
\maketitle
\begin{abstract}
    This paper explores several topics related to Woodin's HOD conjecture.
    We improve the large cardinal hypothesis of Woodin's HOD dichotomy theorem
    from an extendible cardinal to a strongly compact cardinal.
    We show that assuming there is a strongly compact cardinal and the
    \HOD\ hypothesis holds, there is no elementary embedding from \HOD\ to \HOD,
    settling a question of Woodin. We show that the \HOD\ hypothesis
    is equivalent to a uniqueness property of elementary embeddings
    of levels of the cumulative hierarchy. We prove
    that the \HOD\ hypothesis holds if and only
    if every regular cardinal above the first strongly compact cardinal
    carries an ordinal definable \(\omega\)-J\'onsson algebra.
    We show that if \(\kappa\) is supercompact,
    the \HOD\ hypothesis holds, and \HOD\ satisfies the Ultrapower Axiom,
    then \(\kappa\) is supercompact in \HOD.
\end{abstract}
\begin{comment}
\subsection{Strongly measurable cardinals}
If \(M\) is an inner model and \(A\in M\) is a stationary subset of an ordinal \(\delta\).
We say \(A\) is an \textit{\(M\)-atom} if \(A\) cannot be split into disjoint
stationary sets that belong to \(M\). 
If \(S\) is a stationary subset of \(\delta\), let \(F_S\) denote the set 
of all \(A\subseteq \delta\) such that for some closed unbounded \(C\subseteq \delta\),
\(S\cap C\subseteq A\). The following observation is essentially due to Woodin:
\begin{prp}
    Suppose \(S\) is an ordinal definable stationary subset of a regular cardinal
    \(\delta\). 
    Then the following are equivalent:
    \begin{enumerate}[(1)] 
    \item \(S\) is the union of fewer than \(\delta\)-many \(\HOD\)-atoms
    and a nonstationary set.
    \item For some ordinal \(\gamma\) such that \((2^\gamma)^\HOD < \delta\),
    \(F_S\cap \HOD\) is \(\gamma\)-saturated in \(\HOD\).
    \item There is an ordinal definable partition of \(S\) into 
    fewer than \(\delta\)-many \(\HOD\)-atoms.
    \qed
    \end{enumerate}
\end{prp}
    Suppose \(\gamma\) and \(\delta\) are ordinals and \(S\subseteq \delta\) is a stationary set.
    Then \(S\) is \textit{\(\gamma\)-atomic over \(\HOD\)}
    if there is an ordinal definable partition of \(S\) into 
    fewer than \(\gamma\)-many \(\HOD\)-atoms.
\end{comment}
\section{Introduction}
The Jensen covering theorem \cite{Jensen} states that
if \(0^\#\) does not exist, then every uncountable set of
ordinals is covered by a constructible set of the same cardinality.
This leads to a strong dichotomy for the cardinal correctness of the constructible
universe \(L\):
\begin{thm*}[Jensen]
    Exactly one of the following holds:
    \begin{enumerate}[(1)]
        \item For all singular cardinals \(\lambda\), \(\lambda\) is singular in \(L\)
        and \((\lambda^+)^L = \lambda^+\).
        \item Every infinite cardinal is strongly inaccessible in \(L\). 
    \end{enumerate}
\end{thm*}
The proof of this theorem and its generalizations to larger canonical inner models
tend to make heavy use of the special structure of such models. By completely different techniques,
however,
Woodin \cite{Woodin} showed that such a dichotomy holds
for the (noncanonical) inner model \(\HOD\) under large cardinal hypotheses.
\begin{thm*}[Woodin's \HOD\ dichotomy]
    If \(\kappa\) is extendible, exactly one of the following holds:
    \begin{enumerate}[(1)]
        \item \label{item:HOD_hypothesis} For all singular cardinals \(\lambda > \kappa\), \(\lambda\) is singular in \(\HOD\) and \((\lambda^+)^\HOD = \lambda^+\).
        \item Every regular cardinal greater than
        or equal to \(\kappa\) is measurable in \(\HOD\). 
    \end{enumerate}
\end{thm*}
Woodin's HOD conjecture states that conclusion \ref{item:HOD_hypothesis},
which in this context is known as the \textit{\HOD\ hypothesis},\footnote{
    In the context of bare ZFC (without large cardinal axioms),
    the \HOD\ hypothesis states that there is a proper class of regular
    cardinals that are not \(\omega\)-strongly measurable in \HOD. Conceivably
    the \HOD\ hypothesis is provable in ZFC.
}
is provable from large cardinal axioms. 

The first few theorems of this paper (\cref{section:hod_dichotomy}) show that the 
HOD dichotomy in fact takes hold far
below the least extendible cardinal.
\begin{thm*}
    If \(\kappa\) is strongly compact, exactly one of the following holds:
    \begin{enumerate}[(1)]
        \item For all singular cardinals \(\lambda > \kappa\), 
        \(\lambda\) is singular in \(\HOD\) and \((\lambda^+)^\HOD = \lambda^+\).
        \item All sufficiently large regular cardinals are measurable in \(\HOD\). 
    \end{enumerate}
\end{thm*}
We prove this as a corollary of a theorem
establishes a similar dichotomy for a fairly broad class of inner models.
An inner model \(N\) of ZFC is \textit{\(\omega\)-club amenable} if
for all ordinals \(\delta\) of uncountable cofinality, the \(\omega\)-club filter
\(F\) on \(\delta\) satisfies \(F\cap N \in N\).
\begin{repthm}{cor:club_amenable_weak_cover}
    Assume \(N\) is \(\omega\)-club amenable and
     \(\kappa\) is \(\omega_1\)-strongly compact.
    Either all singular cardinals \(\lambda > \kappa\)
    are singular in \(N\) and \((\lambda^+)^N = \lambda^+\), or
    all sufficiently large regular cardinals are measurable in \(N\). 
\end{repthm}
Note that we use a somewhat weaker large cardinal hypothesis than 
strong compactness: a cardinal \(\kappa\) is \(\omega_1\)-strongly compact
if every \(\kappa\)-complete filter extends to an \(\omega_1\)-complete
ultrafilter.

The main theorem of \cite{UniqueEmbeddings}
states that if \(j_0,j_1 : V\to M\) are elementary embeddings,
then \(j_0\restriction \Ord = j_1\restriction \Ord\).
\cref{section:uniqueness} turns to the relationship between local
forms of this theorem and Woodin's HOD conjecture.
\begin{repthm}{thm:extendible_uniqueness}
    Assume \(\kappa\) is an extendible cardinal.
    Then the following are equivalent:
    \begin{enumerate}[(1)]
        \item The \HOD\ hypothesis.
        \item For all regular \(\delta\geq \kappa\) and
        all sufficiently large \(\alpha\), any elementary embeddings
        \(j_0,j_1 : V_\alpha\to M\) with
        \(j_0(\delta) = j_1(\delta)\) and \(\sup j_0[\delta] = \sup j_1[\delta]\)
        agree on \(\delta\).
    \end{enumerate}
\end{repthm}
We also prove a related theorem that connects the failure of the HOD hypothesis
to definable infinitary partition properties, choiceless large cardinals,
and sharps.

A cardinal \(\lambda\) is \textit{J\'onsson} if for all \(f : [\lambda]^{<\omega}\to \lambda\),
there is a proper subset \(A\) of \(\lambda\) of cardinality \(\lambda\)
such that \(f(s) = s\) for all \(s\in [A]^{<\omega}\).
A cardinal \(\lambda\) is \textit{constructibly J\'onsson}
if this holds for all constructible functions \(f : [\lambda]^{<\omega}\to \lambda\),
there is a proper subset \(A\) of \(\lambda\) of cardinality \(\lambda\)
such that \(f(s) = s\) for all \(s\in [A]^{<\omega}\).
\begin{prp*}
    The following are equivalent:
    \begin{itemize}
    \item \(0^\#\) exists.
    \item Every uncountable cardinal is constructibly J\'onsson.
    \item Some uncountable cardinal is constructibly J\'onsson.
    \end{itemize}
\end{prp*}

A cardinal \(\lambda\) is \textit{\(\omega\)-J\'onsson} if for all 
\(f : [\lambda]^{\omega}\to \lambda\),
there is a proper subset \(A\) of \(\lambda\) of cardinality \(\lambda\)
such that \(f(s) = s\) for all \(s\in [A]^{\omega}\).
Recall Kunen's theorem that there is no elementary embedding
\(j : V\to V\). Kunen proved his theorem by showing
that if \(j(\lambda) = \lambda\), then \(\lambda\) is \(\omega\)-J\'onsson.
He then cites the following combinatorial theorem:
\begin{thm*}[Erd\"os-Hajnal]
    There are no \(\omega\)-J\'onsson cardinals.
\end{thm*}
A cardinal \(\lambda\) is \textit{definably \(\omega\)-J\'onsson}
if for all ordinal definable functions 
\(f : [\lambda]^{\omega}\to \lambda\),
there is a proper subset \(A\) of \(\lambda\) of cardinality \(\lambda\)
such that \(f(s) = s\) for all \(s\in [A]^{\omega}\).
\begin{repthm}{thm:jonsson}
    Assume \(\kappa\) is strongly compact.
    Then the following are equivalent:
    \begin{itemize}
        \item The \HOD\ hypothesis fails.
        \item All sufficiently large 
    regular cardinals are definably \(\omega\)-J\'onsson.
        \item Some regular cardinal above \(\kappa\) is definably \(\omega\)-J\'onsson.
    \end{itemize}
\end{repthm}

In \cref{section:hod_rigid}, we use techniques drawn from the
proof of the meain theorem of
\cite{UniqueEmbeddings} to answer a decade-old question of Woodin (implicit in \cite{Woodin}).
Recall Kunen's famous theorem relating the rigidity of \(L\) to \(0^\#\):
\begin{thm*}[Kunen]
    If \(0^\#\) does not exist, then there is no nontrivial elementary embedding from \(L\) to \(L\).
\end{thm*}
Here we prove an analog for \HOD, replacing the assumption that \(0^\#\)
does not exist with the \HOD\ hypothesis.
\begin{repthm}{thm:hod_rigid} 
    Assume there is an \(\omega_1\)-strongly compact cardinal.
    If the \HOD\ hypothesis holds, then there is no nontrivial elementary embedding from \HOD\ to \HOD.
\end{repthm}

\cref{section:ad} turns to the question of the structure
of strongly compact cardinals
in the HOD of a model of determinacy. In this context, something close
to the failure of the HOD hypothesis holds, and yet we will show that
HOD still has certain local covering properties in regions of strong compactness. 
One consequence of our analysis is the following theorem
on the equivalence of strong compactness and \(\omega_1\)-strong compactness in \(\HOD\) 
assuming Woodin's \textit{\(\HOD\) ultrafilter conjecture}: 
under AD + \(V = L(P(\mathbb R))\), every \(\omega_1\)-complete
ultrafilter of \(\HOD\) generates an \(\omega_1\)-complete filter in \(V\).
(The HOD ultrafilter conjecture is true in \(L(\mathbb R)\) and
more generally in all inner models
of determinacy amenable to the techniques of contemporary inner model theory.)
\begin{repthm}{thm:hod_uf}
    Assume \(\AD^+\), \(V = L(P(\mathbb R))\), and the \(\HOD\)-ultrafilter conjecture.
    Suppose \(\kappa < \Theta\) is a regular cardinal and \(\delta\geq \kappa\) is a
    \(\HOD\)-regular ordinal. Then \(\kappa\) is \(\delta\)-strongly compact in \(\HOD\)
    if and only if \(\kappa\) is \((\omega_1,\delta)\)-strongly compact in \(\HOD\).
\end{repthm}
This should be compared with a theorem of \cite{UA} stating
that under the Ultrapower Axiom, for any regular cardinal \(\delta\), 
the least \((\omega_1,\delta)\)-strongly
compact cardinal is \(\delta\)-strongly compact. It is natural to conjecture
that assuming AD, if \(V = L(P(\mathbb R))\), then \(\HOD\) is a model of the Ultrapower Axiom.

The final section (\cref{section:wem}) explores the absoluteness of large cardinals to \(\HOD\)
in the context of a strongly compact cardinal.
Woodin \cite{Woodin} showed:
\begin{thm*}[Woodin]
    Suppose \(\kappa\) is extendible
    and the \HOD\ hypothesis holds. Then \(\kappa\) is extendible in \(\HOD\).
\end{thm*}
By results of Cheng-Friedman-Hamkins \cite{chengfriedmanhamkins},
the least supercompact cardinal need not be weakly compact in \(\HOD\).
Here we show that nevertheless, \(\HOD\) is very close to an inner model with a supercompact
cardinal.
\begin{repthm}{thm:hod_wem}
    Suppose \(\kappa\) is supercompact and the \HOD\ hypothesis holds.
    Then \(\kappa\) is supercompact in an inner model \(N\) of \ZFC\ 
    such that \(\HOD\subseteq N\) and \(\HOD^{<\kappa}\cap N\subseteq \HOD\).
\end{repthm}
As a corollary, we show:
\begin{repthm}{thm:hod_ua}
    Suppose \(\kappa\) is supercompact, the \HOD\ hypothesis holds,
    and \HOD\ satisfies the Ultrapower Axiom.
    Then \(\kappa\) is supercompact in \(\HOD\).
\end{repthm}
\section{On the HOD dichotomy}\label{section:hod_dichotomy}
\subsection{Strongly measurable cardinals}
For any regular cardinal \(\gamma\) and any ordinal \(\delta\) of cofinality greater than \(\gamma\), 
\(\mathcal C_\delta\) denotes
the closed unbounded filter on \(\delta\),
\(S^\delta_\gamma\) denotes the set of ordinals less than \(\delta\) of cofinality
\(\gamma\), and \(\mathcal C_{\delta,\gamma}\) denotes the filter generated by
\(\mathcal C_\delta \cup \{S^\delta_\gamma\}\), or equivalently the filter
generated by the \(\gamma\)-closed unbounded subsets of \(\delta\).
Similarly, \(S^\delta_{<\gamma} = \bigcup_{\lambda < \gamma} S^\delta_{\lambda}\)
and \(\mathcal C_{\delta,{<}\gamma}\) denotes the filter 
generated by \(\mathcal C_\delta\cup \{S^\delta_{<\gamma}\}\).

A filter \(F\) is \textit{\(\gamma\)-saturated}
if any collection of \(F\)-almost disjoint \(F\)-positive sets has
cardinality less than \(\gamma\). The saturation of
a filter is a measure of how close the filter is to being an ultrafilter.
\begin{lma}[Ulam]\label{lma:ulam}
    Suppose \(F\) is a \((2^{\gamma})^+\)-complete \(\gamma\)-weakly saturated filter.
    Then \(F\) is the intersection of fewer than \(\gamma\)-many ultrafilters.\qed
\end{lma}
Note that if a \(\gamma\)-complete filter 
\(F\) is the intersection of fewer
than \(\gamma\)-many ultrafilters,
then the underlying set of \(F\) can be
partitioned into atoms of \(F\).

If \(\nu < \delta\) are regular cardinals, \(\delta\) is \textit{\(\nu\)-strongly measurable}
in an inner model \(N\) if \(\mathcal C_{\delta,\nu}\cap N\) belongs to \(N\) and is
\(\gamma\)-weakly saturated
in \(N\) for some \(N\)-cardinal \(\gamma\) such that
\((2^\gamma)^N < \delta\).
\begin{lma}
    Suppose \(N\) is an inner model of \ZFC,
    \(\nu < \delta\) are regular cardinals, and \(\eta\) is 
    the least \(N\)-cardinal such that \((2^\eta)^N \geq \delta\). 
    Then the following are equivalent:
    \begin{itemize}
        \item \(\delta\) is \(\nu\)-strongly measurable in \(N\).
        \item For some \(\gamma < \eta\),
        \(\mathcal C_{\delta,\nu}\cap N\) can be written as an intersection 
        of fewer than \(\gamma\)-many ultrafilters of \(N\).
        \item For some \(\gamma < \eta\),
        \(\delta\) can be partitioned into \(\gamma\)-many \(\mathcal C_{\delta,\nu}\)-positive
        sets \(\langle S_\alpha : \alpha < \nu\rangle\in N\) such that 
        \((\mathcal C_{\delta,\nu}\restriction S_\alpha)\cap N\) is an ultrafilter in \(N\).\qed
    \end{itemize}
\end{lma}
\subsection{The HOD dichotomy}
Suppose 
\(M\) is an inner model,
\(\lambda\) is a cardinal,
and \(\lambda'\) is an \(M\)-cardinal.
An inner model \(M\) has the \textit{\((\lambda,\lambda')\)-cover property} if for every set \(\sigma\subseteq M\)
of cardinality less than \(\lambda\), there is a set \(\tau\in M\) of \(M\)-cardinality less than \(\lambda'\)
such that \(\sigma\subseteq \tau\). We refer to the \((\lambda,\lambda)\)-cover property as
the \textit{\(\lambda\)-cover property}.
\begin{prp}\label{prp:hod_dichotomy1}
    Suppose \(\kappa\) is strongly compact. Then exactly one of the following holds:
    \begin{enumerate}[(1)]
        \item All sufficiently large regular cardinals are measurable in \(\HOD\).
        \item \(\HOD\) has the \(\kappa\)-cover property.
    \end{enumerate}
\end{prp}
\begin{proof}
    There are two cases.
    \begin{case} There is a \(\HOD\)-cardinal \(\eta\)
        such that for all regular cardinals \(\delta\),
        \(\mathcal C_{\delta,{<}\kappa}\cap \HOD\) is \(\eta\)-saturated in \(\HOD\).
    \end{case}
    In this case, \cref{lma:ulam} implies that all regular cardinals
    \(\delta > (2^\eta)^\HOD\) are measurable in \(\HOD\).
    \begin{case} For every \(\HOD\)-cardinal \(\eta\),
        there is a regular cardinal \(\delta\) such that
        \(\mathcal C_{\delta,{<}\kappa}\cap \HOD\) is not \(\eta\)-saturated in \(\HOD\).
    \end{case}
    We claim that \(\HOD\) has the \(\kappa\)-cover property.
    For this, it suffices to show that for all \(\eta\), 
    there is a \(\kappa\)-complete
    fine ultrafilter \(\mathcal U\) on \(P_\kappa(\eta)\)
    such that \(\HOD\cap P_\kappa(\eta)\in \mathcal U\).
    Then for each \(\sigma\subseteq\eta\) with \(|\sigma| < \kappa\), the set
    \(\{\tau\in P_\kappa(\eta) : \sigma\subseteq \tau\}\) belongs to \(\mathcal U\)
    by fineness and \(\kappa\)-completeness. Thus this set meets \(\HOD\cap P_\kappa(\eta)\),
    which yields a set \(\tau\in \HOD\) such that \(|\tau|^\HOD < \kappa\)
    and \(\sigma\subseteq \tau\).

    Fix an ordinal \(\eta\). Let \(\delta\) be a regular cardinal
    such that \(\mathcal C_{\delta,{<}\kappa}\cap \HOD\) is not \(\eta\)-saturated in \(\HOD\).
    Let \(\mathcal S = \{S_\alpha : \alpha < \eta\}\) be an ordinal definable partition of
    \(S^\delta_{<\kappa}\) into stationary sets.

    Since \(\kappa\) is strongly compact, there is an
    elementary embedding \(j : V\to M\) with critical point \(\kappa\) 
    such that \(M\) has
    the \((\delta^+, j(\kappa))\)-cover property.
    Let \(\delta_* = \sup j[\delta]\). This implies that \(\cf^M(\delta_*) < j(\kappa)\).
    Fix a closed unbounded set \(C\subseteq \delta_*\) such that
    \(C\in M\) and \(|C|^M < j(\kappa)\). 
    
    Let \(\mathcal T = \langle T_\beta : \beta < j(\eta)\rangle = j(\mathcal S)\).
    Let \[\sigma = \{\beta < j(\eta) : T_\beta\cap \delta_*\text{ is stationary in }M\}\]
    Then \(\sigma\) is ordinal definable in \(M\) since \(\mathcal T\) is.
    Notice that for all \(\alpha < \eta\), 
    \(j[S_\alpha]\) is a stationary subset of \(\delta_*\): \(j\) is continuous on \(S^\delta_{<\kappa}\),
    and so there is a continuous increasing cofinal function \(f : \delta\to \delta_*\)
    such that \(j[S_\alpha] = f[S_\alpha]\).
    Since
    \(j[S_\alpha]\subseteq T_{j(\alpha)}\), \(j(\alpha)\in \sigma\).
    On the other hand, for each \(\beta\in \sigma\), 
    let \(f(\beta) = \min (T_\beta\cap C)\). 
    Then \(f : \sigma\to C\) is an injection, so \(|\sigma|^M < j(\kappa)\).
    In other words, \(\sigma\in j(P_\kappa(\eta))\).
    
    Let \(\mathcal U\) be the ultrafilter on \(P_\kappa(\eta)\) derived
    from \(j\) using \(\sigma\). Then \(\HOD\cap P_\kappa(\eta)\in \mathcal U\)
    since \(\sigma\) is ordinal definable in \(M\). Moreover \(\mathcal U\) is fine
    since \(j[\eta]\subseteq \sigma\). Finally \(\mathcal U\) is 
    \(\kappa\)-complete since \(\crit(j) = \kappa\).

    This finishes the proof that \(\HOD\) has the \(\kappa\)-cover property.
\end{proof}

The proof of \cref{prp:hod_dichotomy1} shows that if HOD does not cover \(V\), then
sufficiently large
regular cardinals are measurable in \(\HOD\) in a strong sense.
\begin{prp}\label{prp:hod_dichotomy2}
    Suppose \(\kappa\) is strongly compact. Then exactly one of the following holds:
    \begin{enumerate}[(1)]
        \item For some \(\gamma\), for all ordinals \(\delta\) with \(\cf(\delta) \geq \gamma\),
        \(\mathcal C_{\delta,<\kappa}\cap \HOD\) is the intersection
        of fewer than \(\gamma\)-many ultrafilters in \(\HOD\).
        \item \(\HOD\) has the \(\kappa\)-cover property.\qed
    \end{enumerate}
\end{prp}
For any \(\OD\) set \(A\subseteq\OD\), let \(|A|^\OD\) denote the minimum ordertype of an ordinal
definable wellorder of \(A\). For any set \(X\), let \(\beta(X)\) denote the set of ultrafilters on \(X\).
\begin{prp}
    Suppose \(\kappa\) is strongly compact and \(\HOD\) has the \(\kappa\)-cover property.
    Then for any cardinal \(\lambda \geq \kappa\), 
    \(\HOD\) has the \(({\leq}\lambda,\theta)\)-cover property
    where \(\theta = |P^\OD(\beta(\lambda))|^\OD\).
    \begin{proof}
        We may assume that \(\lambda\) is a regular cardinal.
        Let \(\mathcal U\) be a \(\kappa\)-complete fine ultrafilter on \(P_\kappa(\lambda)\).
        Note that \(\sigma\) is covered by a set of size less than \(j_\mathcal U(\kappa)\) in
        \(M_\mathcal U\). Since \(\HOD^{M_\mathcal U}\) has the \(j_\mathcal U(\kappa)\)-cover property
        in \(M_\mathcal U\), \(\sigma\) is in fact covered by a set of size less than \(j_\mathcal U(\kappa)\)
        in \(\HOD^{M_\mathcal U}\). But \(\HOD^{M_\mathcal U}\subseteq \HOD_\mathcal U\), and
        so \(\sigma\) is covered by a set of size less than \((2^\lambda)^+\)
        in \(\HOD_\mathcal U\). But \(\HOD_\mathcal U\) is a 
        \(\theta\)-cc extension of \(\HOD\) and \(\theta\geq (2^\lambda)^+\), 
        so \(\sigma\) is covered by a set of size less than
        \(\theta\) in \(\HOD\).
    \end{proof}
\end{prp}
\begin{cor}
    If \(\kappa\) is strongly compact, either all sufficiently large regular cardinals
    are measurable in \(\HOD\) or \(\HOD\) has the \(\lambda\)-cover property
    for all strong limit cardinals \(\lambda\geq \kappa\).\qed
\end{cor}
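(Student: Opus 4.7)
The plan is to combine \cref{prp:hod_dichotomy1} with the preceding proposition and then use the strong-limit hypothesis to absorb the cardinal arithmetic in the bound $\theta$.

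First, I invoke \cref{prp:hod_dichotomy1}. In its first case every sufficiently large regular cardinal is measurable in $\HOD$, which is one of the two alternatives in the corollary, so there is nothing more to do. Assume instead that $\HOD$ has the $\kappa$-cover property, so that the preceding proposition applies: for every cardinal $\mu\geq \kappa$, $\HOD$ has the $({\leq}\mu,\theta_\mu)$-cover property, where $\theta_\mu = |P^\OD(\beta(\mu))|^\OD$.

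Now fix a strong limit cardinal $\lambda\geq\kappa$ and a set $\sigma\subseteq \HOD$ with $|\sigma|<\lambda$. Set $\mu = \max(\kappa,|\sigma|)$, so $\kappa\leq\mu<\lambda$. The key step is to bound $\theta_\mu$: since $\beta(\mu)$ has cardinality at most $2^{2^\mu}$, the set $P^\OD(\beta(\mu))$ admits an ordinal definable wellorder of length at most $2^{2^{2^\mu}}$, and the strong limit property applied three times gives $2^{2^{2^\mu}}<\lambda$. Hence $\theta_\mu<\lambda$, and the $({\leq}\mu,\theta_\mu)$-cover property supplies a cover $\tau\in\HOD$ of $\sigma$ with $|\tau|^\HOD<\theta_\mu<\lambda$. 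Since $\sigma$ was arbitrary with $|\sigma|<\lambda$, this is exactly the $\lambda$-cover property.

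There is really no obstacle here beyond the routine verification of the iterated exponential bound against the strong-limit hypothesis; the content is entirely absorbed by the preceding proposition together with \cref{prp:hod_dichotomy1}, and the present argument is the bookkeeping that makes the $({\leq}\mu,\theta)$-statement usable uniformly as $\mu$ ranges below $\lambda$.
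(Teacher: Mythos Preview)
Your argument is correct and is exactly the derivation the paper has in mind (it simply marks the corollary with \qed\ after the proposition). Two tiny quibbles that do not affect the conclusion: first, the edge case \(\lambda=\kappa\) makes \(\mu=\kappa=\lambda\), but then the \(\kappa\)-cover property itself gives the conclusion directly; second, from \(|P^\OD(\beta(\mu))|\leq 2^{2^{2^\mu}}\) one only gets that every OD wellorder has ordertype \(<(2^{2^{2^\mu}})^+\), not \(\leq 2^{2^{2^\mu}}\), but since a strong limit is a limit cardinal this still yields \(\theta_\mu<\lambda\).
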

In particular, for all strong limit singular cardinals \(\lambda \geq \kappa\), 
\(\lambda\) is singular in \(\HOD\) and \(\lambda^{+\HOD} = \lambda^+\).
But in fact one can prove this for arbitrary singular cardinals above \(\kappa\).
The proof is more general in two ways. First, it uses a weaker large cardinal hypothesis:
a cardinal \(\kappa\) is \textit{\(\omega_1\)-strongly compact} if every \(\kappa\)-complete
filter extends to a countably complete ultrafilter. 

Second, it applies to a broader class of models than just \(\HOD\):
an inner model \(N\) of ZFC is \textit{\(\omega\)-club amenable} if
for every ordinal \(\delta\) of uncountable cofinality, \(\mathcal C_{\delta,\omega}\cap N\in N\).
The proof of the following
lemma is similar to that of \cref{prp:hod_dichotomy1}:
\begin{lma}\label{lma:club_amenable_cover}
    Suppose \(N\) is \(\omega\)-club amenable and 
    \(\kappa\) is \(\omega_1\)-strongly compact. Then one of the following holds:
    \begin{enumerate}[(1)]
        \item All sufficiently large regular cardinals are measurable in \(N\).
        \item \(N\) has the \((\omega_1,\kappa)\)-cover property.
    \end{enumerate}
    \begin{proof}The proof splits into cases as in \cref{prp:hod_dichotomy1}
        \begin{case} There is an \(N\)-cardinal \(\eta\)
            such that for all regular cardinals \(\delta\),
            \(\mathcal C_{\delta,\omega}\cap N\) is \(\eta\)-saturated in \(N\).
        \end{case}
        Then for all sufficiently large regular cardinals
        \(\delta\), \(\delta\) is measurable in \(N\) by 
        \cref{lma:ulam}, applied in \(N\) to \(\mathcal C_{\delta, \omega}\cap N\) in \(N\).
        \begin{case} For every \(N\)-cardinal \(\eta\),
            there is a regular cardinal \(\delta\) such that
            \(\mathcal C_{\delta,\omega}\cap N\) is not \(\eta\)-saturated in \(N\).
        \end{case}
        In this case, one shows that for all \(\lambda \geq \kappa\), there is a 
        countably complete fine ultrafilter \(\mathcal U\) on \(P_\kappa(\lambda)\)
        such that \(N\cap P_\kappa(\lambda)\in \mathcal U\). It follows that
        every countable set \(\sigma\subseteq \lambda\) belongs 
        to some \(\tau\in N\cap P_\kappa(\lambda)\).
    \end{proof}
\end{lma}

\begin{thm}\label{thm:cf_card}
    Suppose \(\nu\) is a regular cardinal and \(N\) 
    is an \(\omega\)-club amenable inner model with the \((\omega_1,\nu)\)-cover property. 
    Then for all \(N\)-regular \(\delta \geq \nu\), 
    \(\cf(\delta) = |\delta|\).
\end{thm}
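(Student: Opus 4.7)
The plan is first to show $\cf^V(\delta) > \omega$ directly from the $(\omega_1,\nu)$-cover property and $N$-regularity, and then to derive a contradiction from $\cf^V(\delta) < |\delta|^V$ by combining $\omega$-club amenability with the cover property to force a small $N$-cofinal set in $\delta$.

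For the first step, suppose $\sigma \subseteq \delta$ is a countable cofinal set. The $(\omega_1,\nu)$-cover would give $\tau \in N$ with $\sigma \subseteq \tau$ and $|\tau|^N < \nu \leq \delta$. After replacing $\tau$ by $\tau \cap \delta \in N$, the $N$-regularity of $\delta$ forces $\sup\tau < \delta$, contradicting $\sup\tau \geq \sup\sigma = \delta$. So $\cf^V(\delta) \geq \omega_1$.

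Suppose toward contradiction that $\lambda := \cf^V(\delta) < \mu := |\delta|^V$. Because $\delta$ now has uncountable $V$-cofinality, $\omega$-club amenability yields $F := \mathcal{C}_{\delta,\omega} \cap N \in N$. Running the cover argument inside each $\alpha \in S^\delta_\omega$ (cover a countable cofinal sequence, intersect with $\alpha$) shows $\cf^N(\alpha) < \nu$ for all $\alpha \in S^\delta_\omega$. Hence the set $T := \{\alpha < \delta : \cf^N(\alpha) < \nu\}$ lies in $N$, contains $S^\delta_\omega \in \mathcal{C}_{\delta,\omega}$, and so $T \in F$.

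The main step, and the main obstacle, is to use $F$ and $T$ to produce an $N$-set cofinal in $\delta$ of $N$-cardinality less than $\delta$, contradicting $N$-regularity. My intended approach is to partition $T = \bigsqcup_{\gamma < \nu} T_\gamma$ in $N$ by $N$-cofinality, exploit the $\lambda$-completeness that $F$ inherits in $N$ from the $\cf^V(\delta) = \lambda$-completeness of $\mathcal{C}_{\delta,\omega}$ in $V$ to locate an $F$-positive $T_\gamma$ when $\nu \leq \lambda$, pick in $N$ an $N$-cofinal $\gamma$-sequence $c_\alpha \subseteq \alpha$ for each $\alpha \in T_\gamma$, and assemble these into a single cofinal subset of $\delta$ of $N$-cardinality at most $\lambda \cdot \gamma$, which is less than $\delta$ since $\lambda < \mu \leq \delta$ and $\gamma < \nu \leq \delta$. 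The hard part will be carrying out this assembly inside $N$, since $F$-positivity is a $V$-level property and does not immediately deliver an $N$-witness to the cofinality of $T_\gamma$ in $\delta$; the complementary case $\lambda < \nu$ will likely require a separate argument, either descending to a smaller $N$-regular ordinal or iterating the cover along a cofinal sequence in $\delta$ using $F$ to organize the iteration.
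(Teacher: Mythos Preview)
Your setup is correct: the $(\omega_1,\nu)$-cover property gives $S^\delta_\omega \subseteq T := (S^\delta_{<\nu})^N$, and one fixes $\langle c_\xi : \xi \in T\rangle \in N$ with each $c_\xi$ closed cofinal in $\xi$ of ordertype less than $\nu$. But the step you yourself flag as the hard part --- assembling from these a cofinal subset of $\delta$ of small $N$-cardinality --- is a genuine gap, and aiming for such an $N$-object is the wrong target. The quantity $\lambda = \cf^V(\delta)$ is invisible to $N$, so there is no evident way to select inside $N$ a $\lambda$-sized subfamily of the $c_\alpha$'s whose union is cofinal; $F$-positivity of some $T_\gamma$ only tells you $T_\gamma$ is unbounded, and $\bigcup_{\alpha\in T_\gamma} c_\alpha$ still has $N$-cardinality $\delta$. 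Your case split on $\nu$ versus $\lambda$ and the partition of $T$ by exact $N$-cofinality are both red herrings.

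The paper does not try to contradict $N$-regularity by producing a small $N$-cofinal set; it bounds $|\delta|$ directly in $V$. The missing idea is this. Using $\omega$-club amenability, one can define \emph{in $N$} a continuous increasing sequence $\langle \epsilon_\alpha : \alpha < \delta\rangle$ so that for every $\alpha$ the set $S_\alpha = \{\xi \in T : c_\xi \cap [\epsilon_\alpha,\epsilon_{\alpha+1}) \neq \emptyset\}$ is stationary: take $\epsilon_{\alpha+1}$ to be the least $\beta$ with $\{\xi \in T : c_\xi \cap [\epsilon_\alpha,\beta) \neq \emptyset\}$ $\mathcal C_{\delta,\omega}$-positive (this exists by Fodor, is computable in $N$ since $N$ sees which of its sets lie in the dual ideal of $F$, and limit suprema stay below $\delta$ precisely because the sequence is in $N$ and $\delta$ is $N$-regular). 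Set $\sigma_\xi = \{\alpha : c_\xi \cap [\epsilon_\alpha,\epsilon_{\alpha+1}) \neq \emptyset\}$, so $|\sigma_\xi| \leq |c_\xi| < \nu$. Now take any $V$-club $C\subseteq \delta$ of ordertype $\cf(\delta)$: each stationary $S_\alpha$ meets $C$, hence $\delta = \bigcup_{\xi\in C}\sigma_\xi$, giving $|\delta| = \cf(\delta)\cdot \sup_{\xi\in C}|\sigma_\xi|$. If $\cf(\delta) < \nu$ this forces $|\delta| < \nu \leq \delta$, a contradiction; otherwise $|\delta| = \cf(\delta)$.
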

\begin{proof}
    Let \(A = (S^\delta_{<\nu})^N\). The \((\omega_1,\nu)\)-cover property of \(N\) implies
    that \(S^\delta_\omega\subseteq A\), which is all we will use.
    Fix \(\langle c_\xi : \xi\in A\rangle\in N\) such that
    \(c_\xi\) is a closed cofinal subset of \(\xi\) of ordertype less than \(\nu\). 
    For each \(\alpha < \delta\), let \(\beta_\alpha\) denote the least ordinal
    such that for a stationary set of \(\xi\in S^\delta_\omega\),
    \(c_\xi\cap [\alpha,\beta_\alpha)\neq \emptyset\). 
    One can prove that \(\beta_\alpha\) exists for all \(\alpha < \delta\)
    by applying Fodor's lemma to the function \(f(\xi) = \min (c_\xi\setminus \alpha)\).
    Define a continuous increasing sequence \(\langle \epsilon_\alpha : \alpha < \delta\rangle\)
    by setting \(\epsilon_{\alpha+1} = \beta_{\epsilon_\alpha}\), taking suprema at limit steps;
    these suprema are always below \(\delta\) because the sequence belongs to \(N\)
    and \(\delta\) is regular in \(N\). 
   
    For each \(\xi \in A\), let  
    \[\sigma_\xi = \{\alpha < \delta : c_\xi\cap [\epsilon_\alpha,\epsilon_{\alpha+1})\neq \emptyset\}\]
    Note that \(|\sigma_\xi| < \nu\) since \(|c_\xi| < \nu\).
    Moreover for all \(\alpha\), the set 
    \(S_\alpha = \{\xi \in A : \alpha\in \sigma_\xi\}\) is stationary.
    Let \(C\subseteq \delta\) be a closed cofinal set of ordertype \(\cf(\delta)\).
    For any \(\alpha < \delta\), there is some \(\xi\in S_\alpha \cap C\),
    which means that \(\alpha\in \sigma_\xi\).
    This implies that \(\delta = \bigcup_{\xi\in C} \sigma_\xi\).
    Thus \(|\delta| = |C|\cdot \sup_{\xi\in C} |\sigma_\xi|\).
    If \(|C| < \nu\), then since \(\nu\) is regular, \(\sup_{\xi\in C} |\sigma_\xi| < \nu\) and hence
    \(|\delta| < \nu\), contradicting that \(\delta \geq \nu\). Therefore
    \(|C|\geq \nu\). 
    Therefore \(|\delta| = |C|\cdot \sup_{\xi\in C} |\sigma_\xi| = |C|\cdot \nu = |C| = \cf(\delta)\).
\end{proof}

\begin{cor}\label{cor:club_amenable_weak_cover}
    Suppose \(\kappa\) is \(\omega_1\)-strongly compact and \(N\) is
    an \(\omega\)-club amenable inner model. Either all 
    sufficiently large regular cardinals are measurable in \(N\)
    or every singular cardinal \(\lambda\) greater than \(\kappa\) is singular in \(N\)
    and \(\lambda^{+N} = \lambda^+\).\qed
\end{cor}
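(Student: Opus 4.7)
The plan is to combine \cref{lma:club_amenable_cover} with \cref{thm:cf_card}, setting $\nu = \kappa$. If the first alternative of \cref{lma:club_amenable_cover} holds, we are already done, so I assume the second alternative: $N$ has the $(\omega_1,\kappa)$-cover property. Then \cref{thm:cf_card} applies with $\nu = \kappa$, giving that every $N$-regular ordinal $\delta \geq \kappa$ satisfies $\cf^V(\delta) = |\delta|^V$. The goal is to read off the desired covering statement for singular cardinals from this identity.

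Fix a singular cardinal $\lambda > \kappa$. For the first conclusion, suppose toward contradiction that $\lambda$ is regular in $N$. Since $\lambda \geq \kappa$, \cref{thm:cf_card} yields $\cf^V(\lambda) = |\lambda|^V = \lambda$, contradicting the singularity of $\lambda$ in $V$. Hence $\lambda$ is singular in $N$.

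For the second conclusion, suppose toward contradiction that $(\lambda^+)^N < \lambda^+$, and set $\delta = (\lambda^+)^N$. Then $\delta$ is $N$-regular as a successor $N$-cardinal, and $\delta \geq \lambda \geq \kappa$, while $|\delta|^V = \lambda$ since $\lambda \leq \delta < \lambda^+$. Applying \cref{thm:cf_card}, $\cf^V(\delta) = |\delta|^V = \lambda$. But $\cf^V(\delta)$ is a regular cardinal at most $\lambda$, and since $\lambda$ is singular in $V$, this forces $\cf^V(\delta) < \lambda$, a contradiction. Thus $(\lambda^+)^N = \lambda^+$, completing the proof.

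There is no real obstacle here beyond being careful about the cofinality comparison in the last step: the key point is that $\cf^V(\delta)$ must be regular, so it cannot equal a singular cardinal, which is what converts the equation $\cf^V(\delta) = |\delta|^V$ into the contradiction we need. Everything else is a straightforward combination of the already-established cover lemma and the cofinality-equals-cardinality theorem.
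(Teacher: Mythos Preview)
Your proof is correct and is exactly the argument the paper intends: the corollary is marked with \qed, indicating it follows immediately from \cref{lma:club_amenable_cover} and \cref{thm:cf_card}, and you have spelled out precisely how. One small technical point: \cref{thm:cf_card} requires $\nu$ to be regular, and an $\omega_1$-strongly compact cardinal need not be; if $\kappa$ is singular, take $\nu = \kappa^+$ instead---the $(\omega_1,\kappa)$-cover property trivially yields the $(\omega_1,\kappa^+)$-cover property, and since every singular $\lambda > \kappa$ satisfies $\lambda \geq \kappa^+$, both halves of your argument go through unchanged.
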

We now state some reformulations of the failure of the HOD hypothesis.
\begin{thm}\label{thm:stationary_partitions}
    If \(\kappa\) is strongly compact, the following are equivalent
    to the failure of the \HOD\ hypothesis:
    \begin{enumerate}[(1)]
        \item\label{item:no_partition} There is a regular cardinal \(\delta\geq \kappa\)
        with a stationary subset \(S\subseteq S^\delta_{<\kappa}\)
        that admits no ordinal definable partition into \(\delta\)-many stationary sets.
        \item There is an \(\omega\)-strongly measurable cardinal above \(\kappa\).
        \item For some regular \(\gamma < \kappa\), there is a \(\gamma\)-strongly measurable cardinal 
        above \(\kappa\).
        \item For all regular
        \(\gamma < \kappa\), all sufficiently large regular cardinals
        are \(\gamma\)-strongly measurable.
        \item\label{item:eventual_saturation} For some cardinal \(\lambda\), for all regular \(\gamma < \kappa\)
        and all \(\nu\) with \(\cf(\nu) > \gamma\), 
        \(\mathcal C_{\nu,\gamma}\cap \HOD\) is \(\lambda\)-saturated in \(\HOD\).
    \end{enumerate}
    \begin{proof}
        We first show that \ref{item:no_partition} implies that the \HOD\ hypothesis is false.
        By the proof of \cref{prp:hod_dichotomy1}, if the \HOD\ hypothesis fails,
        then \ref{item:eventual_saturation} holds. Since each item easily implies
        all the previous ones, the theorem easily follows.

        It remains to show that \ref{item:no_partition} implies the failure of the \HOD\ hypothesis,
        or in other words, that the \HOD\ hypothesis implies that
        every stationary subset \(S\subseteq S^\delta_{<\kappa}\) splits into \(\delta\)-many subsets.
        This follows from the proof of the Solovay splitting theorem and the fact that
        \((S^\delta_{<\kappa})^\HOD = S^\delta_{<\kappa}\). 
    \end{proof}
\end{thm}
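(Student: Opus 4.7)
The plan is to establish a cycle of implications threading through the failure of the \HOD\ hypothesis. The three pieces I need to verify are (A) the failure of the \HOD\ hypothesis implies \ref{item:eventual_saturation}, (B) the easy implications \ref{item:eventual_saturation} \(\Rightarrow\) (4), (4) \(\Rightarrow\) (3), (4) \(\Rightarrow\) (2), and each of (2) and (3) \(\Rightarrow\) \ref{item:no_partition}, and (C) \ref{item:no_partition} implies the failure of the \HOD\ hypothesis.

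For (A), I revisit the two-case analysis in the proof of \cref{prp:hod_dichotomy1}. The failure of the \HOD\ hypothesis forces us into the saturation case of that proof, since the cover case would yield the \HOD\ hypothesis via \cref{thm:cf_card}; this gives an \HOD-cardinal \(\eta\) such that \(\mathcal C_{\delta,<\kappa}\cap \HOD\) is \(\eta\)-saturated in \HOD\ for every regular \(\delta\). To derive \ref{item:eventual_saturation} I transfer this saturation to each \(\mathcal C_{\delta,\gamma}\cap \HOD\) with \(\gamma<\kappa\) regular, using that \(S^\delta_\gamma\subseteq S^\delta_{<\kappa}\) makes any antichain for \(\mathcal C_{\delta,\gamma}\) in \HOD\ into one for \(\mathcal C_{\delta,<\kappa}\). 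I then pass from regular \(\delta\) to arbitrary \(\nu\) with \(\cf(\nu)>\gamma\) by pulling back antichains along a continuous cofinal map \(\cf(\nu)\to\nu\).

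The downward implications of (B) are bookkeeping. \ref{item:eventual_saturation} \(\Rightarrow\) (4): for regular \(\delta>(2^\lambda)^\HOD\), the \(\lambda\)-saturation of \(\mathcal C_{\delta,\gamma}\cap \HOD\) witnesses \(\gamma\)-strong measurability. (4) \(\Rightarrow\) (3) is trivial, and (4) \(\Rightarrow\) (2) is the specialization to \(\gamma=\omega\). Either (2) or (3) implies \ref{item:no_partition} by taking \(S = S^\delta_\gamma\) at the witnessing cofinality: any ordinal definable partition of \(S\) into \(V\)-stationary sets lies in \HOD\ and is an antichain of \(\mathcal C_{\delta,\gamma}\cap \HOD\)-positive sets (a subset of \(S^\delta_\gamma\) is positive exactly when it is \(V\)-stationary), so its size is bounded by the weak saturation constant, which is strictly less than \(\delta\).

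The real content of the theorem is (C), which I prove in contrapositive: assuming the \HOD\ hypothesis, every ordinal definable stationary \(S\subseteq S^\delta_{<\kappa}\) admits an ordinal definable partition into \(\delta\)-many \(V\)-stationary sets. Two ingredients feed in. First, the \HOD\ hypothesis with \(\kappa\) strongly compact gives \HOD\ the \(\kappa\)-cover property (the alternative in \cref{prp:hod_dichotomy1} being ruled out), and covering a \(V\)-cofinal sequence of length \(<\kappa\) in \(\xi\) by a \HOD-set of \HOD-cardinality \(<\kappa\) yields both \((S^\delta_{<\kappa})^\HOD = S^\delta_{<\kappa}\) and an ordinal definable assignment \(\xi\mapsto c_\xi\) of cofinal sequences of \HOD-length \(<\kappa\). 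Second, I run Solovay's splitting theorem in \(V\) using these \(c_\xi\) as the cofinal-sequence parameter. The main obstacle is to verify that the pieces of the resulting partition are \(V\)-stationary and not merely \HOD-stationary: the resolution is that Solovay's construction is at its core a \(V\)-level Fodor argument applied to the \(V\)-stationary set \(S\) with regressive functions built from the \(c_\xi\); stationarity of the output is a \(V\)-fact, while the ordinal definability of the \(c_\xi\) propagates to the partition.
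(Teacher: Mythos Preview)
Your argument follows the same cycle as the paper's proof and the key step (C) is handled exactly as the paper intends: the \(\kappa\)-cover property yields \((S^\delta_{<\kappa})^{\HOD}=S^\delta_{<\kappa}\), hence an ordinal definable ladder system, and Solovay's splitting run in \(V\) with that ladder produces \(V\)-stationary pieces that are ordinal definable. Your chain of downward implications in (B) is also fine; routing through both \((4)\Rightarrow(2)\) and \((4)\Rightarrow(3)\) and then \((2),(3)\Rightarrow(1)\) is arguably cleaner than the paper's bare assertion that ``each item easily implies all the previous ones,'' since a direct \((3)\Rightarrow(2)\) is not entirely obvious.

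One small gap in (A): your reduction from arbitrary \(\nu\) with \(\cf(\nu)>\gamma\) to the regular case by pulling back an antichain along a continuous cofinal map \(\pi:\cf(\nu)\to\nu\) needs \(\pi\in\HOD\) for the preimages \(\pi^{-1}[A_\alpha]\) to lie in \(\HOD\), and in the regime where the \HOD\ hypothesis fails there is no reason \(\HOD\) computes \(\cf(\nu)\) correctly. The paper avoids this by observing (as \cref{prp:hod_dichotomy2} makes explicit) that the two-case analysis in the proof of \cref{prp:hod_dichotomy1} already goes through for ordinals \(\delta\) of sufficiently large cofinality, not just regular cardinals, so no pullback is needed. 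A second minor point: you restrict (C) to ordinal definable \(S\), which is almost certainly the intended reading of \ref{item:no_partition}; the paper's statement is loose here, since a non-OD \(S\) cannot literally admit an OD partition.
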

It is easy to see that if the \(\HOD\) hypothesis fails, then 
every regular cardinal \(\delta\) contains an \(\omega\)-club of ordinals
that are strongly inaccessible in \(\HOD\).
In fact, this alone implies a stronger conclusion.
\begin{thm}\label{thm:omega_club_to_club}
    Suppose \(N\) is an inner model and \(\delta\) is a regular cardinal
    such that \(\Reg^N\cap \delta\) contains an \(\omega\)-club.
    Then \(\Reg^N\cap \delta\) contains a club.
    \begin{proof}
        Assume towards a contradiction that \(\Sing^N\cap \delta\) is stationary.
        Since the cofinality function as computed in 
        \(N\) is regressive, there is an ordinal
        \(\gamma\) that is regular in \(N\)
        such that \((S^\delta_\gamma)^N\) is stationary.
        Let \(\gamma\) be the least such ordinal and let
        \(S = (S^\delta_\gamma)^N\). We claim \(\cf(\gamma) = \omega\).

        Choose \(\langle c_\xi : \xi\in S\rangle\in N\)
        such that \(c_\xi\) is a closed unbounded subset
        of \(\xi\) with ordertype \(\gamma\). Consider the
        set \(T\) of \(\nu\in S^\delta_\omega\) such that
        there is some \(\xi \geq \nu\) in \(S\) 
        such that \(c_\xi\cap \nu\) is cofinal in \(\nu\).
        Then \(T\) is stationary. To see this, fix a closed unbounded
        set \(C\subseteq \delta\), and we will show that \(C\cap T\neq \emptyset\).
        Fix \(\xi\in S\cap \acc(C)\). 
        Since \(\cf(\xi) > \omega\), 
        \(C\cap \acc(c_\gamma)\) is closed unbounded in \(\gamma\), and
        so there is some \(\nu\in C\cap \acc(c_\gamma)\cap S^\delta_\omega\).
        By definition, \(\nu\in T\). So \(C\cap T\neq \emptyset\), as claimed.
        
        Note that
        \(T\subseteq (S^\delta_{<\gamma})^N\), so \((S^\delta_{<\gamma})^N\)
        is stationary. Again applying Fodor's lemma,
        there is an \(N\)-regular ordinal \(\gamma' < \gamma\) 
        such that \((S^\delta_{\gamma'})^N\) is stationary, and this contradicts
        the minimality of \(\gamma\).

        Since \((S^\delta_\gamma)^N\) is a stationary subset of \(S^\delta_\omega\), 
        its intersection with any \(\omega\)-club is stationary.
        This implies that 
        \((S^\delta_\gamma)^N\cap \Reg^N\) is stationary, which is a contradiction.
    \end{proof}
\end{thm}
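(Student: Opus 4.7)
The plan is to argue by contradiction: assume $\Reg^N\cap\delta$ contains no club, so $\Sing^N\cap\delta$ is stationary. The map $\xi\mapsto\cf^N(\xi)$ is regressive on $\Sing^N\cap\delta$ and takes $N$-regular values, so Fodor's lemma yields some $N$-regular $\gamma<\delta$ for which $S_0=(S^\delta_\gamma)^N$ is stationary; fix $\gamma$ least with this property.

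The crux is to show $\cf(\gamma)=\omega$. Suppose instead that $\cf(\gamma)>\omega$, so each $\xi\in S_0$ satisfies $\cf^V(\xi)=\cf^V(\gamma)>\omega$. Fix a sequence $\langle c_\xi:\xi\in S_0\rangle\in N$ with $c_\xi\subseteq\xi$ closed cofinal of ordertype $\gamma$, and set
\[T=\{\nu\in S^\delta_\omega:(\exists\,\xi\in S_0)\ \xi\geq\nu \text{ and } c_\xi\cap\nu \text{ is cofinal in }\nu\}.\]
Given any club $C\subseteq\delta$, pick $\xi\in S_0\cap\acc(C)$: the set $C\cap\acc(c_\xi)$ is club in $\xi$, and since $\cf(\xi)>\omega$ the set $S^\xi_\omega$ is stationary in $\xi$, so this club meets $S^\xi_\omega$ at some $\nu\in C\cap T$; thus $T$ is stationary. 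Next, for $\nu\in T$ the witness $\xi$ must strictly exceed $\nu$ (else $\cf^V(\nu)=\cf^V(\gamma)>\omega$ contradicts $\nu\in S^\delta_\omega$), so $c_\xi\cap\nu$ is a subset of $\nu$ lying in $N$, cofinal in $\nu$, of ordertype strictly less than $\gamma$; this yields $\cf^N(\nu)<\gamma$. Hence $T\subseteq(S^\delta_{<\gamma})^N$, and Fodor applied to $\cf^N$ on this stationary set produces an $N$-regular $\gamma'<\gamma$ with $(S^\delta_{\gamma'})^N$ stationary, contradicting the minimality of $\gamma$.

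With $\cf(\gamma)=\omega$ in hand, $S_0\subseteq S^\delta_\omega$. By hypothesis there is a club $C\subseteq\delta$ with $C\cap S^\delta_\omega\subseteq\Reg^N$; then $S_0\cap C$ is stationary and contained in $\Reg^N$. But any $\xi\in S_0$ satisfies $\cf^N(\xi)=\gamma$, so $N$-regularity of $\xi$ forces $\xi=\gamma$, which occurs at most once; this contradicts the stationarity of $S_0\cap C$. The main obstacle is the verification that $\cf(\gamma)=\omega$: running the accumulation-point argument so that $T$ is genuinely stationary and confirming that $\xi>\nu$ strictly, which delivers the strict inequality $\cf^N(\nu)<\gamma$ needed to invoke the minimality of $\gamma$. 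Everything else amounts to two applications of Fodor and the elementary observation that an $N$-regular ordinal of $V$-cofinality $\omega$ cannot have $N$-cofinality a fixed $\gamma<\xi$.
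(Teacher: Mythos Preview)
Your proof is correct and follows essentially the same approach as the paper's: assume $\Sing^N\cap\delta$ is stationary, use Fodor to isolate the least $N$-regular $\gamma$ with $(S^\delta_\gamma)^N$ stationary, argue that $\cf(\gamma)=\omega$ via the auxiliary stationary set $T$ of accumulation points of the clubs $c_\xi$, and finish by observing that $(S^\delta_\gamma)^N$ then meets the given $\omega$-club in a stationary set of $N$-regulars of $N$-cofinality $\gamma$, which is absurd. If anything, your write-up is cleaner than the paper's on two points: you make the case assumption $\cf(\gamma)>\omega$ explicit, and you spell out why the witness $\xi$ for $\nu\in T$ must be strictly above $\nu$ (so that $\ot(c_\xi\cap\nu)<\gamma$ and hence $\cf^N(\nu)<\gamma$), whereas the paper asserts $T\subseteq(S^\delta_{<\gamma})^N$ without comment.
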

\begin{cor}
    Assume there is an \(\omega_1\)-strongly compact cardinal \(\kappa\) and \(N\) is
    an \(\omega\)-club amenable model such that \(\lambda^{+N} < \lambda^+\) for some
    singular \(\lambda > \kappa\).
    Then all sufficiently large regular cardinals 
    contain a closed unbounded set of \(N\)-inaccessible cardinals.
    \begin{proof}
        Suppose \(\delta\) is \(\omega\)-strongly measurable in \(N\),
        and we will show that there is an \(\omega\)-club of \(N\)-regular ordinals below \(\delta\).
        Suppose not. Then \(\Sing^N\) is \(\mathcal C_{\delta,\omega}\)-positive.
        It follows that there is a set \(S\in N\) such that \(S\subseteq \Sing^N\)
        and \(\mathcal C_{\delta,\omega}\restriction S\) is a normal ultrafilter in \(N\).
        This is a contradiction since \(\Reg^N\cap \delta\) belongs to any normal ultrafilter
        of \(N\) on \(\delta\).

        Since all sufficiently large regular cardinals are \(\omega\)-strongly measurable in \(N\)
        by \cref{cor:club_amenable_weak_cover}, the desired conclusion follows
        from \cref{thm:omega_club_to_club}.
    \end{proof}
\end{cor}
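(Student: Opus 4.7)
The plan is to combine the earlier dichotomy with a normality argument and then the \(\omega\)-club-to-club upgrade. The hypothesis that \(\lambda^{+N}<\lambda^{+}\) for some singular \(\lambda>\kappa\) rules out the weak covering alternative in \cref{cor:club_amenable_weak_cover}, so all sufficiently large regular cardinals are measurable in \(N\). Moreover, inspection of the proof of \cref{lma:club_amenable_cover} shows that this measurability arises only from Case~1, which furnishes a single \(N\)-cardinal \(\eta\) such that \(\mathcal C_{\delta,\omega}\cap N\) is \(\eta\)-saturated in \(N\) for every regular \(\delta\). Therefore every regular \(\delta>(2^{\eta})^{N}\) is \(\omega\)-strongly measurable in \(N\), which is the strengthening of measurability we need.

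The central step is to show that for any such \(\delta\), the set \(\mathrm{Reg}^{N}\cap\delta\) is \(\mathcal C_{\delta,\omega}\)-large. I would argue by contradiction: suppose \(\mathrm{Sing}^{N}\cap\delta\) is \(\mathcal C_{\delta,\omega}\)-positive. Applying \cref{lma:ulam} inside \(N\) to the \(\eta\)-saturated filter \(\mathcal C_{\delta,\omega}\cap N\), one obtains in \(N\) a partition \(\langle A_i : i<\gamma\rangle\) of its support into \(\mathcal C_{\delta,\omega}\)-atoms for some \(\gamma<\eta\). Since \(\mathrm{Sing}^{N}\cap\delta\) is positive, some atom \(A\in N\) satisfies \(\mathrm{Sing}^{N}\in (\mathcal C_{\delta,\omega}\restriction A)\cap N\); because \(\mathcal C_{\delta,\omega}\) is a normal filter on \(S^{\delta}_{\omega}\), the restricted ultrafilter is itself a normal ultrafilter of \(N\) on \(\delta\). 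This contradicts the classical fact that any normal ultrafilter on a cardinal measurable in \(N\) concentrates on the \(N\)-regular cardinals, completing the step.

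With \(\mathrm{Reg}^{N}\cap\delta\) now \(\mathcal C_{\delta,\omega}\)-large, \cref{thm:omega_club_to_club} upgrades it to containing a genuine club \(D\subseteq\delta\). Elements of \(D\) that are \(V\)-regular and exceed \((2^{\eta})^{N}\) are \(\omega\)-strongly measurable (hence measurable, hence inaccessible) in \(N\), and limit points of \(D\) lying above this threshold are limits of \(N\)-regulars, so \(N\)-limit cardinals; together this yields a club of \(N\)-inaccessibles in \(\delta\), as required. I expect the main technical obstacle to be the normalization step in the second paragraph: one must confirm that the positivity of \(\mathrm{Sing}^{N}\) propagates through the Ulam decomposition to produce an atom on which the restricted filter is a normal ultrafilter in \(N\) containing \(\mathrm{Sing}^{N}\). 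The rest of the argument is essentially bookkeeping on top of previously established lemmas.
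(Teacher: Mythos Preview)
Your proposal is correct and follows essentially the same route as the paper: you invoke the dichotomy to get \(\omega\)-strong measurability at all sufficiently large regulars, use the atomic decomposition of \(\mathcal C_{\delta,\omega}\cap N\) to find an atom \(S\in N\) contained in \(\mathrm{Sing}^N\) on which the restricted filter is a normal \(N\)-ultrafilter, derive a contradiction from the fact that normal ultrafilters concentrate on regulars, and then apply \cref{thm:omega_club_to_club}. The paper compresses your Ulam step into the single sentence ``there is a set \(S\in N\) such that \(S\subseteq \mathrm{Sing}^N\) and \(\mathcal C_{\delta,\omega}\restriction S\) is a normal ultrafilter in \(N\),'' but the content is identical, and your anticipated obstacle is exactly the point the paper glosses over.
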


\begin{cor}
    Assume there is an \(\omega_1\)-strongly compact cardinal and the \(\HOD\) hypothesis fails.
    Then all sufficiently large regular cardinals
    contain a closed unbounded set of \(\HOD\)-inaccessible cardinals.\qed
\end{cor}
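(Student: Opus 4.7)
The plan is to reduce this to the previous corollary applied to $N = \HOD$. For that to go through, two things have to be checked: that \HOD\ is \(\omega\)-club amenable, and that the failure of the \HOD\ hypothesis supplies a singular cardinal $\lambda > \kappa$ witnessing the hypothesis of that corollary (or at least the conclusion of \cref{cor:club_amenable_weak_cover} applied to \HOD).

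First I would verify that \HOD\ is \(\omega\)-club amenable. For any ordinal $\delta$ of uncountable cofinality, the restriction $\mathcal{C}_{\delta,\omega}\cap \HOD$ is definable from $\delta$ alone (a set $A\in \HOD$ lies in $\mathcal{C}_{\delta,\omega}$ iff $S^\delta_\omega\setminus A$ is nonstationary, a condition whose parameters are ordinals), so it is ordinal definable and hence an element of \HOD. With this in hand, \cref{cor:club_amenable_weak_cover} applies to \HOD: either all sufficiently large regular cardinals are measurable in \HOD, or else every singular $\lambda > \kappa$ is singular in \HOD\ with $\lambda^{+\HOD} = \lambda^+$. The failure of the \HOD\ hypothesis rules out the second alternative, so all sufficiently large regular cardinals are measurable in \HOD, which a fortiori makes them \(\omega\)-strongly measurable in \HOD.

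Next I would run the argument from the proof of the preceding corollary to obtain an $\omega$-club of \HOD-inaccessibles below each sufficiently large regular $\delta$. If $\mathrm{Sing}^\HOD\cap\delta$ were $\mathcal{C}_{\delta,\omega}$-positive, then the \(\omega\)-strong measurability of $\delta$ in \HOD\ would yield an \HOD-stationary $S \subseteq \mathrm{Sing}^\HOD$ on which $\mathcal{C}_{\delta,\omega}\restriction S$ is a normal \HOD-ultrafilter; but any normal ultrafilter on $\delta$ in \HOD\ concentrates on \HOD-inaccessibles, contradicting $S\subseteq \mathrm{Sing}^\HOD$. Hence $\mathrm{Sing}^\HOD\cap\delta$ is null for $\mathcal{C}_{\delta,\omega}$, i.e.\ \HOD-regulars contain an $\omega$-club; the same argument applied to the complement of the \HOD-inaccessibles (using that normal ultrafilters concentrate on strong limits as well) gives an $\omega$-club of \HOD-inaccessibles.

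Finally I would invoke \cref{thm:omega_club_to_club} (whose proof adapts verbatim with ``inaccessible'' replacing ``regular'', since inaccessibility in $N$ is also definable from the $N$-cofinality function and ordinals) to upgrade the $\omega$-club of \HOD-inaccessibles to a genuine closed unbounded set, giving the stated conclusion. The only place where there is any real content beyond bookkeeping is the \(\omega\)-club amenability of \HOD, and this is essentially the definability observation above; everything else is a direct reduction to the machinery already developed in the section.
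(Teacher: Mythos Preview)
Your reduction is correct and matches the paper, which marks the corollary with \qed\ as immediate from the preceding result applied to $N=\HOD$. One small slip: measurability in \HOD\ does not \emph{a fortiori} give $\omega$-strong measurability in \HOD\ (the latter is a saturation property of the restricted $\omega$-club filter, not the mere existence of a normal measure). The conclusion is still right, because Case~1 in the proof of \cref{lma:club_amenable_cover} already yields $\omega$-strong measurability; alternatively, once all large $V$-regulars are measurable (hence inaccessible) in \HOD, any large singular $\lambda$ satisfies $\lambda^{+\HOD}<\lambda^+$, so you can cite the preceding corollary outright.
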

\section{Embeddings of HOD}\label{section:uniqueness}
\subsection{Uniqueness of elementary embeddings}
We prove that the HOD Hypothesis is equivalent to a uniqueness property of elementary embeddings
that makes no mention of ordinal definability.
\begin{lma}
    If \(\kappa\) is supercompact, then for all regular \(\delta \geq \kappa\) and all \(\gamma > \delta\), the set 
    \[\{\sup j[\bar \delta] : \bar \delta < \bar \gamma < \delta,\ j:V_{\bar\gamma}\to V_\gamma
    \text{ is elementary, and }j(\bar \delta) = \delta\}\]
    is stationary in \(\delta\).
\end{lma}
\begin{lma}\label{lma:hod_regular}
    Suppose \(\kappa\) is strongly compact and for arbitrarily
    large regular cardinals \(\delta \geq \kappa\),
    the set \(\{\alpha < \delta : \cf^\HOD(\alpha) < \kappa\}\) is stationary. Then
    the \(\HOD\) hypothesis holds.
    \begin{proof}
        This follows from \cref{prp:hod_dichotomy2}.
    \end{proof}
\end{lma}
\begin{defn}
    Suppose \(j_0,j_1 : M\to N\) are elementary embeddings. For any ordinal
    \(\delta\in M\), \(j_0\) and \(j_1\)
    are \textit{similar at \(\delta\)} if \(\sup j_0[\delta] = \sup j_1[\delta]\)
    and \(j_0(\delta) = j_1(\delta)\). For any ordinal \(\delta'\in N\),
    \(j_0\) and \(j_1\) are \textit{similar below \(\delta'\)} if 
    there is some \(\delta \in M\) such that \(j_0(\delta) = j_1(\delta) = \delta'\)
    and \(j_0\) and \(j_1\) are similar at \(\delta\).
\end{defn}

\begin{thm}
    Suppose \(\kappa\) is a supercompact cardinal. Then the following are equivalent:
    \begin{enumerate}[(1)]
        \item The \(\HOD\) hypothesis holds.\label{item:hod_hyp}
        \item 
        For all regular \(\delta\geq \kappa\), for all sufficiently large \(\gamma\),
        if \(j_0,j_1 : V_{\bar \gamma}\to V_\gamma\) are similar below \(\delta\),
        then \(j_0\restriction \bar \delta = j_1\restriction \bar \delta\).\label{item:universal_sub}
    \end{enumerate}
    \begin{proof}
        Assume \ref{item:hod_hyp} holds. Let 
        \(\langle S_\alpha : \alpha < \delta\rangle\) be the least partition of
        \(S^\delta_\omega\) into stationary sets
        in the canonical wellorder of \(\HOD\). For sufficiently large ordinals \(\gamma\),
        \(\langle S_\alpha : \alpha < \delta\rangle\) is definable from \(\delta\)
        in \(V_\gamma\).
        Suppose \(j: V_{\bar \gamma}\to V_\gamma\) and \(j(\bar \delta) = \delta\).
        Then \(\langle S_\alpha : \alpha < \delta\rangle\) is in the range of \(j\) since \(\delta\) is. 
        As a consequence,
        \(j[\bar \delta]\) is the set of \(\alpha < \delta\) such that \(S_\alpha\)
        reflects to \(\sup j[\bar \delta]\). \ref{item:universal_sub} follows immediately.

        Conversely, assume \ref{item:universal_sub}. Fix a regular cardinal \(\delta > \kappa\).
        For each \(\gamma > \delta\), define
        \[S_\gamma = \{\sup j[\bar \delta] : j:V_{\bar\gamma}\to V_\gamma\text{ is elementary and }j(\bar \delta) = \delta\}\]
        Fix an ordinal \(\gamma\) sufficiently large that if \(j_0,j_1 : V_{\bar \gamma}\to V_\gamma\) are elementary embeddings with 
        \(j_0(\bar \delta) = j_1(\bar \delta) = \delta\) and \(\sup j_0[\bar \delta] = \sup j_1[\bar \delta]\),
        then \(j_0\restriction \bar \delta = j_1\restriction \bar \delta\). 
        For \(\xi \in S_\gamma\), let \(c_\xi = j[\bar \delta]\)
        for \(j : V_{\bar \gamma}\to V_\gamma\) such that \(j(\bar \delta) = \delta\) and 
        \(\sup j[\bar\delta] = \xi\).
        Then \(c_\xi\) is ordinal definable, so \(\cf^\HOD(\xi) \leq \ot(c_\xi)\).
        By \cref{lma:hod_regular}, the HOD hypothesis holds. 
    \end{proof}
\end{thm}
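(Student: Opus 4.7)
For the forward direction, assume the \HOD\ hypothesis and fix a regular $\delta\geq\kappa$. My plan is to use a canonical ordinal-definable stationary partition $\langle S_\alpha : \alpha<\delta\rangle$ of $S^\delta_\omega$ (the canonically least such in the wellorder of \HOD), whose existence is guaranteed by \cref{thm:stationary_partitions}; for all sufficiently large $\gamma$ this sequence is definable from $\delta$ inside $V_\gamma$. Given embeddings $j_0,j_1 : V_{\bar\gamma}\to V_\gamma$ similar below $\delta$, both pull the sequence back to the same canonical partition $\langle \bar S_{\bar\alpha} : \bar\alpha<\bar\delta\rangle \in V_{\bar\gamma}$. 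Setting $\xi = \sup j_0[\bar\delta] = \sup j_1[\bar\delta]$ and invoking Solovay's Lemma (using continuity of each $j_i$ at ordinals of cofinality $\omega$, which is valid since $\crit(j_i)>\omega$), the set $j_i[\bar\delta]$ coincides with $\{\alpha<\delta : S_\alpha\cap\xi\text{ is stationary in }\xi\}$, a quantity independent of $i$. Hence $j_0[\bar\delta] = j_1[\bar\delta]$, and since $j_i\restriction\bar\delta$ is the unique order-isomorphism from $\bar\delta$ onto this common set, $j_0\restriction\bar\delta = j_1\restriction\bar\delta$.

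For the converse, fix a regular $\delta\geq\kappa$ and choose $\gamma$ large enough for the uniqueness hypothesis of (2) to apply to embeddings into $V_\gamma$. By the preceding supercompactness lemma, $S_\gamma = \{\sup j[\bar\delta] : j : V_{\bar\gamma}\to V_\gamma\ \text{elementary and }j(\bar\delta)=\delta\}$ is stationary in $\delta$. For each $\xi\in S_\gamma$ the uniqueness hypothesis forces $c_\xi := j[\bar\delta]$ to be independent of the witnessing $j$, hence ordinal definable from $\xi$, $\delta$ and $\gamma$; since $c_\xi$ is cofinal in $\xi$ of ordertype $\bar\delta$, this yields $\cf^\HOD(\xi)\leq \bar\delta$, and \cref{lma:hod_regular} then produces the \HOD\ hypothesis.

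The principal obstacle I anticipate is forcing $\bar\delta<\kappa$ on a stationary subset of $S_\gamma$, so that $\cf^\HOD(\xi)<\kappa$ matches the exact hypothesis of \cref{lma:hod_regular}. My plan here is to inspect the proof of the supercompactness stationarity lemma: a normal fine ultrafilter on $P_\kappa(\gamma)$ should produce elementary substructures of $V_\gamma$ of size less than $\kappa$ whose Mostowski collapse is rank-initial and therefore isomorphic to some $V_{\bar\gamma}$ with $\bar\gamma<\kappa$, yielding witnesses of the needed small size. If this step proves delicate, I would bypass the sharper bound and work with $\cf^\HOD(\xi)<\delta$ directly: a failure of the \HOD\ hypothesis, combined with \cref{cor:club_amenable_weak_cover} and \cref{thm:omega_club_to_club}, would force a club of \HOD-inaccessibles below $\delta$, contradicting the stationary family of \HOD-singular $\xi\in S_\gamma$ already constructed.
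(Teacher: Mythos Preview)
Your proposal is correct and follows essentially the same route as the paper: both directions hinge on the canonical ordinal-definable partition of \(S^\delta_\omega\) together with Solovay's Lemma for \((1)\Rightarrow(2)\), and on the stationarity of \(S_\gamma\) plus the ordinal definability of \(c_\xi=j[\bar\delta]\) feeding into \cref{lma:hod_regular} for \((2)\Rightarrow(1)\). You are in fact more explicit than the paper on the point that one needs \(\bar\delta<\kappa\) (not merely \(\bar\delta<\delta\)) to invoke \cref{lma:hod_regular}; your plan of extracting this from the supercompactness argument---normal fine measures on \(P_\kappa(V_\gamma)\) yield substructures of size \(<\kappa\), hence \(\bar\gamma<\kappa\)---is exactly how the preceding stationarity lemma is meant to be read, and your fallback via the club of \(\HOD\)-inaccessibles would also work.
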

As a corollary, one has the following equivalent forms of the local uniqueness of
elementary embeddings at an extendible. We note that the equivalences would
be quite mysterious (and hard to prove) without having the \HOD\ hypothesis to tie them together.
\begin{thm}\label{thm:extendible_uniqueness}
    If \(\kappa\) is extendible, then the following are equivalent.
    \begin{enumerate}[(1)]
        \item The \HOD\ hypothesis holds.
        \item For all regular \(\delta \geq \kappa\), for all sufficiently large \(\alpha\),
        if \(j_0,j_1 : V_\alpha\to M\) are similar at \(\delta\),
        then \(j_0\restriction \delta = j_1\restriction \delta\).
        \item For some regular \(\delta \geq \kappa\), for all sufficiently large \(\alpha\),
        if \(j_0,j_1 : V_\alpha\to V_{\alpha'}\) are similar at \(\delta\),
        then \(j_0\restriction \delta = j_1\restriction \delta\).
        \item Let \(\nu\) be the least supercompact cardinal. 
        For all regular \(\delta\geq \nu\), for all sufficiently large \(\alpha\),
        if \(j_0,j_1 : V_{\bar \alpha}\to V_\alpha\) are similar below \(\delta\),
        then \(j_0\restriction \delta = j_1\restriction \delta\).\qed
    \end{enumerate}
\end{thm}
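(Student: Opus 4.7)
The equivalence $(1) \Leftrightarrow (4)$ is immediate from the preceding theorem, applied with the least supercompact cardinal $\nu \leq \kappa$ (which exists because $\kappa$ is extendible, hence supercompact); since the \HOD\ hypothesis makes no mention of $\kappa$, the conclusion at $\nu$ transfers verbatim.

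For $(1) \Rightarrow (2)$, I plan to mimic the $(1) \Rightarrow$-direction of the preceding theorem. Fix a regular $\delta \geq \kappa$, and invoking \cref{thm:stationary_partitions}, use the \HOD\ hypothesis to obtain an ordinal definable partition $\langle S_\beta : \beta < \delta\rangle$ of $S^\delta_\omega$ into stationary sets. For all sufficiently large $\alpha$, this sequence is definable from $\delta$ in $V_\alpha$, so if $j_0, j_1 : V_\alpha \to M$ are similar at $\delta$, both send it to the same $M$-sequence $\langle T_\beta : \beta < j_0(\delta)\rangle$. Continuity of the $j_i$ on $S^\delta_\omega$ ensures $j_i[S_\beta]$ is stationary in $\delta^\ast := \sup j_i[\delta]$ and contained in $T_{j_i(\beta)}$; a Solovay-style stationary reflection then recovers $j_i[\delta]$ as $\{\beta < j_0(\delta) : T_\beta \cap \delta^\ast$ is stationary in $\delta^\ast\}$, independent of $i$. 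Order preservation finally gives $j_0\restriction \delta = j_1\restriction \delta$. The implication $(2) \Rightarrow (3)$ is trivial, since embeddings into $V_{\alpha'}$ are a special case of embeddings into arbitrary targets $M$.

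For $(3) \Rightarrow (1)$, I would proceed as in \cref{prp:hod_dichotomy1}, with extendibility supplying the ordinal definable seed. Fix $\delta$ witnessing (3) and a sufficiently large $\alpha$; choose an extendibility embedding $j : V \to M$ with $\crit(j) = \kappa$, $j(\kappa) > \alpha$, and $V_{j(\alpha)+1} \subseteq M$. Inside $M$, the set $j[\delta]$ is ordinal definable: it is the common value of $k[\delta]$ over all elementary $k : V_\alpha \to V_{j(\alpha)}$ similar to $j\restriction V_\alpha$ at $\delta$, and (3) forces this value to be unique. Deriving from $j$ a $\kappa$-complete fine ultrafilter on $P_\kappa(\delta)$ via the seed $j[\delta]$ yields the $\kappa$-cover property for \HOD\ at $\delta$, and consequently $\{\gamma < \delta : \cf^\HOD(\gamma) < \kappa\}$ is stationary. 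By elementarity of the extendibility embedding, (3) reflects to the cardinal $j(\delta)$ (at rank $\alpha \leq j(\alpha)$), and repeating the construction with a further extendibility embedding of sufficiently large rank yields the analogous stationarity at $j(\delta)$. Since $j(\delta)$ can be made arbitrarily large by varying the rank of the original extendibility embedding, \cref{lma:hod_regular} then delivers the \HOD\ hypothesis.

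The main obstacle will be the propagation step in $(3) \Rightarrow (1)$: the uniqueness hypothesis of (3) is furnished only at the single cardinal $\delta$, so one must leverage the elementarity of varying extendibility embeddings to obtain $\kappa$-cover for \HOD\ at an unbounded class of regular cardinals—this is exactly where extendibility (rather than mere supercompactness) of $\kappa$ is used, making the passage to the global content of \cref{lma:hod_regular} possible.
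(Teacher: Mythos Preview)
Your arguments for $(1)\Leftrightarrow(4)$, $(1)\Rightarrow(2)$, and $(2)\Rightarrow(3)$ are correct and match the paper's approach (the paper treats the theorem as a corollary of the preceding supercompact result together with the Solovay-splitting argument you sketch).

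The gap is in your propagation step for $(3)\Rightarrow(1)$. Once you have shown that $\HOD$ has the $\kappa$-cover property below the single cardinal $\delta$ witnessing (3), you try to reflect (3) along the extendibility embedding $j$ to get the analogous conclusion at $j(\delta)$, and then iterate to reach arbitrarily large regular cardinals so that \cref{lma:hod_regular} applies. This does not work as written: the elementarity of $j:V\to M$ transfers (3) to the statement in $M$ that uniqueness holds at $j(\delta)$, and repeating your construction inside $M$ yields the $j(\kappa)$-cover property for $\HOD^{M}$ below $j(\delta)$, not for $\HOD$. Moreover $j(\delta)$ has $V$-cofinality $\delta$, so it is not a $V$-regular cardinal, and \cref{lma:hod_regular} requires the stationarity hypothesis at $V$-regular cardinals. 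Varying the rank of $j$ makes $j(\delta)$ large but never makes it $V$-regular, so the hypothesis of \cref{lma:hod_regular} is never verified.

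The paper sidesteps this entirely. From the $\kappa$-cover property of $\HOD$ below $\delta$ you already know that every $\alpha<\delta$ of cofinality less than $\kappa$ is singular in $\HOD$; hence no restriction of $\mathcal C_{\delta,\omega}$ to a stationary set can be a normal ultrafilter in $\HOD$ (such an ultrafilter would concentrate on $\HOD$-regulars), so $\delta$ is not $\omega$-strongly measurable in $\HOD$. Now invoke Woodin's original \HOD\ dichotomy for extendibles: if $\kappa$ is extendible and the \HOD\ hypothesis fails, then \emph{every} regular cardinal $\geq\kappa$ is $\omega$-strongly measurable in $\HOD$. Since $\delta\geq\kappa$ is not, the \HOD\ hypothesis holds. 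No propagation is needed; this is precisely where extendibility (as opposed to strong compactness, which only gives that \emph{sufficiently large} regulars are strongly measurable) does its work.
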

\subsection{\(\omega\)-J\'onsson cardinals}
Recall that a cardinal \(\lambda\) is \textit{\(\omega\)-J\'onsson}
if for all functions \(f : [\lambda]^\omega\to \lambda\), there is some
\(H\subseteq \lambda\) such that \(\ot(H) = \lambda\) and
\(\ran(f\restriction [H]^\omega)\) is a proper subset of \(\lambda\).
The Erdos-Hajnal theorem states that assuming ZFC there is no such cardinal.
One can ask what happens when one places definability constraints on \(f\).
\begin{defn}
    A cardinal \(\lambda\) is \textit{definably \(\omega\)-J\'onsson} if for all ordinal 
    definable functions
    \(f : [\lambda]^{\omega}\to \lambda\), there is some \(H\subseteq \lambda\)
    such that \(\ot(H) = \lambda\) and
    \(\ran(f\restriction [H]^\omega)\) is a proper subset of \(\lambda\).
\end{defn}

\begin{thm}\label{thm:jonsson}
    Suppose \(\kappa\) is strongly compact. Then the following are equivalent:
    \begin{enumerate}[(1)]
        \item\label{item:hod_hyp_to_Jonsson} The \HOD\ hypothesis fails.
        \item\label{item:jonsson} All sufficiently large
        regular cardinals are definably \(\omega\)-J\'onsson.
        \item\label{item:some_jonsson} Some regular cardinal above \(\kappa\)
        is definably \(\omega\)-J\'onsson.
    \end{enumerate}
    \begin{proof}
        \textit{\ref{item:hod_hyp_to_Jonsson} implies \ref{item:jonsson}:}
        The converse rests on Solovay's published proof of Solovay's lemma \cite{Solovay}.
        Let \(R\) be the class of regular
        cardinals \(\delta\) that are not definably \(\omega\)-J\'onsson.
        Fix \(\delta\in R\), and we will show that
        \(\HOD\) has the \(\kappa\)-cover property below \(\delta\).
        This completes the proof, because assuming \ref{item:jonsson}
        fails, \(R\) is a proper class, and hence \(\HOD\) has the \(\kappa\)-cover property,
        which, by \cref{prp:hod_dichotomy1}, 
        is equivalent to the \(\HOD\) hypothesis given a strongly compact cardinal.

        Let \(f : [\delta]^\omega\to \delta\) be an ordinal
        definable function such that for all \(A\subseteq \delta\)
        such that \(f[[A]^\omega] = \delta\).
        We claim that any \(\omega\)-closed 
        unbounded subset \(C\) of \(\sup j[\delta]\) such that
        \(j(f)[C]\subseteq C\) contains \(j[\delta]\).
        Let \(B = j^{-1}[C]\). Then \(B\) is unbounded below \(\delta\)
        and \(B\) is closed under \(f\), and therefore
        \(B\supseteq f[B] = \delta\). In other words, \(j[\delta]\subseteq C\).

        Working in \(M\), 
        let \(A\) be the intersection of all \(\omega\)-closed unbounded subsets of
        \(\sup j[\delta]\) that are closed under \(j(f)\). Then \(A\in M\)
        and \(j[\delta]\subseteq A\) and \(A\) is ordinal definable from \(j(f)\) in \(M\).
        Therefore \(A\in \HOD^M\). It follows from the proof of \cref{prp:hod_dichotomy1}
        that \(\HOD\) has the \(\kappa\)-cover property below \(\delta\).

        \textit{\ref{item:jonsson} implies \ref{item:some_jonsson}:} Trivial.

        \textit{\ref{item:some_jonsson} implies \ref{item:hod_hyp_to_Jonsson}:}
        Suppose \(\delta\geq \kappa\)
        is regular and definably \(\omega\)-J\'onsson,
        and assume towards a contradiction that the \HOD\ hypothesis holds.
        Applying \cref{thm:stationary_partitions},
        let \(\langle S_\alpha : \alpha < \delta\rangle\in \HOD\) partition 
        \(S^\delta_\omega\) into stationary sets. Let \(f : [\delta]^\omega\to \delta\)
        be defined by \(f(s) = \alpha\) where \(\alpha < \delta\)
        is the unique ordinal such that \(\sup(s)\in S_\alpha\).
        Then for any unbounded set \(T\subseteq \delta\), 
        for each \(\alpha < \delta\), there is some \(s\in [T]^\omega\)
        such that \(\sup s\in S_\alpha\), and therefore
        \(f(s) = \alpha\). It follows that \(f[[T]^\omega]] = \delta\).
    \end{proof}
\end{thm}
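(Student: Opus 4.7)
The plan is to close the cycle $(2) \Rightarrow (3) \Rightarrow (1) \Rightarrow (2)$. The implication $(2) \Rightarrow (3)$ is immediate, since (2) directly supplies a definably $\omega$-J\'onsson regular cardinal above $\kappa$.

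For $(3) \Rightarrow (1)$ I would argue the contrapositive. Assume the HOD hypothesis and fix a regular $\delta \geq \kappa$; by \cref{thm:stationary_partitions} there is an ordinal definable partition $\langle S_\alpha : \alpha < \delta\rangle$ of $S^\delta_\omega$ into stationary sets. Define the ordinal definable $f : [\delta]^\omega \to \delta$ by letting $f(s)$ be the unique $\alpha$ with $\sup s \in S_\alpha$ (and $0$ otherwise). For any cofinal $T \subseteq \delta$, the set of countable suprema from $T$ is an $\omega$-club in $\delta$, hence meets each $S_\alpha$, forcing $f[[T]^\omega] = \delta$. Thus $\delta$ is not definably $\omega$-J\'onsson.

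The main direction is $(1) \Rightarrow (2)$, proved by contrapositive. Suppose the class $R$ of regular cardinals above $\kappa$ that fail to be definably $\omega$-J\'onsson is unbounded; I want to deduce that $\HOD$ has the $\kappa$-cover property, which by \cref{prp:hod_dichotomy1} implies the HOD hypothesis. Fix $\delta \in R$ with an ordinal definable witness $f : [\delta]^\omega \to \delta$, and use strong compactness to take $j : V \to M$ with $\crit(j) = \kappa$ and $M$ having the $(\delta^+, j(\kappa))$-cover property; put $\delta_* = \sup j[\delta]$. The key claim is that every $\omega$-closed unbounded $C \subseteq \delta_*$ which is closed under $j(f)$ must contain $j[\delta]$. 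Indeed, because $\crit(j) > \omega$, $j$ is continuous at cofinality $\omega$, so $j[\delta]$ is itself an $\omega$-club of $\delta_*$; the intersection $C \cap j[\delta]$ is then $\omega$-club in $\delta_*$, and so $B = j^{-1}[C]$ is cofinal in $\delta$. By elementarity $B$ is closed under $f$, and the anti-J\'onsson property of $f$ forces $f[[B]^\omega] = \delta \subseteq B$.

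Working in $M$, let $A$ be the intersection of all $\omega$-clubs in $\delta_*$ closed under $j(f)$; it is ordinal definable from $j(f)$, so $A \in \HOD^M$, and by the key claim $j[\delta] \subseteq A$. The crucial estimate is $|A|^M < j(\kappa)$, obtained by exhibiting in $M$ a single such $\omega$-club of cardinality less than $j(\kappa)$: start from a closed cofinal $C^* \subseteq \delta_*$ of $M$-ordertype $\cf^M(\delta_*) < j(\kappa)$ (available from the cover property), and iteratively close under countable suprema and $j(f)$ for $\omega_1$ steps; since $j(\kappa)$ is inaccessible in $M$, the $M$-cardinality remains below $j(\kappa)$ throughout. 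The ultrafilter on $P_\kappa(\delta)$ derived from $j$ using $A$ is then $\kappa$-complete, fine, and, since $A \in \HOD^M$, concentrates on $\HOD \cap P_\kappa(\delta)$, producing $\kappa$-cover into $\HOD$ for all subsets of $\delta$. Because $R$ is unbounded, every small set of ordinals lies below some such $\delta$, and the full $\kappa$-cover property for $\HOD$ follows. The main obstacle is precisely the size estimate $|A|^M < j(\kappa)$, which is where genuine strong compactness (not merely $\omega_1$-strong compactness) is used.
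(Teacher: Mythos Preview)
Your proof is correct and follows the same route as the paper: contrapositives for $(1)\Rightarrow(2)$ and $(3)\Rightarrow(1)$, the same Solovay-style claim that any $\omega$-club in $\sup j[\delta]$ closed under $j(f)$ contains $j[\delta]$, and the same derived fine ultrafilter concentrating on $\HOD\cap P_\kappa(\delta)$. The one place you go beyond the paper is the explicit verification that $|A|^M < j(\kappa)$ via an $\omega_1$-length closure in $M$ (using that $j(\kappa)$ is inaccessible there); the paper suppresses this and simply cites ``the proof of \cref{prp:hod_dichotomy1}'', so your write-up is, if anything, more complete on that point.
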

The usual characterizations of J\'onsson cardinals in terms of elementary embeddings yields the
following equivalence.
\begin{cor}
    Assume there is a strongly compact cardinal. Then exactly one of the following holds.
    \begin{enumerate}[(1)]
        \item The \HOD\ hypothesis.
        \item For all sufficiently large regular cardinals \(\delta\),
        for all ordinals \(\alpha > \delta\), there is an elementary embedding
        \(j : M \to \HOD\cap V_\alpha\) such that \(\crit(j) < \delta\), \(j(\delta) = \delta\),
        and \(j\) is continuous at ordinals of cofinality \(\omega\).\qed
    \end{enumerate}
\end{cor}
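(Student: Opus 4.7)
The plan is to combine Theorem~\ref{thm:jonsson} with a standard characterization of the definable \(\omega\)-J\'onsson property in terms of elementary embeddings into levels of \(\HOD\). Under the strongly compact hypothesis, Theorem~\ref{thm:jonsson} already tells us that the failure of the \(\HOD\) hypothesis is equivalent to all sufficiently large regular cardinals being definably \(\omega\)-J\'onsson; it remains to show that for a regular cardinal \(\delta \geq \kappa\), \(\delta\) is definably \(\omega\)-J\'onsson if and only if for every \(\alpha > \delta\) there is an elementary embedding \(j : M \to \HOD \cap V_\alpha\) with \(M\) transitive, \(\crit(j) < \delta\), \(j(\delta) = \delta\), and \(j\) continuous at ordinals of cofinality \(\omega\).

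For the direction from the embedding to definable \(\omega\)-J\'onssonness, fix an ordinal definable \(f : [\delta]^\omega \to \delta\), choose \(\alpha\) large enough that \(f \in \HOD \cap V_\alpha\), and apply the hypothesis to obtain such a \(j\). Set \(H = j[\delta]\); then \(\ot(H) = \delta\) and \(\crit(j) \in \delta \setminus H\), so \(H \subsetneq \delta\). Writing \(\bar f = j^{-1}(f) \in M\), the \(\omega\)-continuity of \(j\) together with elementarity lets one argue that each countable \(s \subseteq H\) is the \(j\)-image of a countable \(\bar s \in M\), so \(f(s) = j(\bar f(\bar s)) \in j[\delta] = H\); hence \(f[[H]^\omega] \subseteq H\) is a proper subset of \(\delta\).

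For the converse, fix \(\alpha > \delta\) and enumerate the OD Skolem functions for the structure \((\HOD \cap V_\alpha, \in, <_{\HOD})\). I would encode the action of every such Skolem function on countable ordinal-tuples from \(\delta\) into a single ordinal definable \(f : [\delta]^\omega \to \delta\), in a way that ensures closure of a subset of \(\delta\) under \(f\) entails closure of its Skolem hull under every OD Skolem function as well as closure under \(\omega\)-suprema. Definable \(\omega\)-J\'onssonness applied to \(f\) then yields \(H \subseteq \delta\) of order type \(\delta\) with \(f[[H]^\omega] \subsetneq \delta\); the Skolem hull \(X\) of \(H \cup \{\delta\}\) therefore satisfies \(X \cap \delta \subsetneq \delta\), and the inverse of its transitive collapse is the required embedding, with \(\omega\)-continuity coming from the built-in closure under \(\omega\)-suprema.

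Combining these equivalences, alternative~(2) of the corollary is equivalent to all sufficiently large regular cardinals being definably \(\omega\)-J\'onsson, which by Theorem~\ref{thm:jonsson} is equivalent to the failure of the \(\HOD\) hypothesis, so the dichotomy follows. I expect the main technical hurdle to be the coding step in the converse direction: one has to bundle ordinary Skolem closure and \(\omega\)-sup closure into a single \(\omega\)-ary OD function on \(\delta\) so that definable \(\omega\)-J\'onssonness simultaneously delivers elementarity of the collapse and continuity of the inverse embedding at \(\omega\)-cofinal ordinals.
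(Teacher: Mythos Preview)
Your plan matches the paper's: reduce to Theorem~\ref{thm:jonsson} via the standard embedding characterization of the definable \(\omega\)-J\'onsson property. The direction ``definably \(\omega\)-J\'onsson \(\Rightarrow\) embedding'' via a single OD function encoding Skolem closure together with closure under countable suprema is the right idea and is exactly what the paper is invoking.

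The other direction as you argue it has a gap. From \(\omega\)-continuity of \(j\) you cannot conclude that each countable \(s\subseteq H=j[\delta]\) has the form \(j(\bar s)\) for some \(\bar s\in M\): the only candidate is \(\bar s=j^{-1}[s]\), and \(\omega\)-continuity only says \(j[M]\cap\Ord\) is closed under countable suprema, not that \(M\) contains this particular countable set of ordinals from \(V\). (There is a related wrinkle: an OD function \(f:[\delta]^\omega\to\delta\) is typically not an element of \(\HOD\cap V_\alpha\) at all, since \([\delta]^\omega\not\subseteq\HOD\), so ``choose \(\alpha\) with \(f\in\HOD\cap V_\alpha\)'' already needs reinterpretation.) This does not doom the corollary, however. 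For the ``(1) and (2) are incompatible'' half it suffices to assume the \HOD\ hypothesis and refute (2) using the \emph{specific} OD function \(f(s)=\alpha\) iff \(\sup(s)\in S_\alpha\) built from an OD stationary partition \(\langle S_\alpha:\alpha<\delta\rangle\) of \(S^\delta_\omega\), exactly as in the proof of Theorem~\ref{thm:jonsson}. Since this \(f\) factors through \(\sup\), \(\omega\)-continuity alone gives \(\sup(s)\in H\) for every \(s\in[H]^\omega\); pulling the partition back along \(j\) then shows \(f(s)\in H\), hence \(f[[H]^\omega]\subseteq H\subsetneq\delta\), contradicting the choice of \(f\). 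So your plan goes through once you replace the general forward implication by this special case.
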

\subsection{The rigidity of HOD}\label{section:hod_rigid}
This section is devoted to a proof of the following theorem:
\begin{thm}\label{thm:hod_rigid} 
    Assume there is an \(\omega_1\)-strongly compact cardinal.
    If the \HOD\ hypothesis holds, then there is no nontrivial elementary embedding from \HOD\ to \HOD.
\end{thm}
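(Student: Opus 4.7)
The plan is to derive a contradiction from a nontrivial $j:\HOD\to\HOD$ by a uniqueness-of-embeddings argument in the spirit of \cref{thm:extendible_uniqueness}: I will locate a regular cardinal $\delta$ at which $j$ and the identity embedding of $\HOD$ are similar, then run the canonical stationary partition argument to conclude $j\restriction\delta = \mathrm{id}\restriction\delta$, contradicting that $\crit(j)<\delta$.

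First I build a fixed point that is also a regular $V$-cardinal. Let $\kappa$ be the $\omega_1$-strongly compact cardinal, let $\kappa_0=\crit(j)$, and set $\lambda_0=\max(\kappa,\kappa_0)$, $\lambda_{n+1}=j(\lambda_n)$, and $\lambda=\sup_n\lambda_n$. Then $j(\lambda)=\lambda$ and $\cf^V(\lambda)=\omega$. Setting $\delta=(\lambda^+)^\HOD$, the \HOD\ hypothesis combined with \cref{cor:club_amenable_weak_cover} applied to $N=\HOD$ gives $\delta=\lambda^+$, so $\delta$ is regular in $V$ and $j(\delta)=\delta$. Since $j[\delta]$ has $V$-cardinality $\delta$ while any $\xi<\delta$ has $V$-cardinality less than $\delta$, we also get $\sup j[\delta]=\delta$. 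Thus $j$ and $\mathrm{id}_\HOD$ are similar at $\delta$ as embeddings from $\HOD$ into $\HOD$.

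By \cref{thm:stationary_partitions}, the \HOD\ hypothesis supplies an ordinal-definable partition $\mathcal{S}=\langle S_\alpha:\alpha<\delta\rangle\in\HOD$ of $S^\delta_\omega$ into \HOD-stationary sets. Because $\mathcal{S}$ is \OD\ from $\delta$ and $j(\delta)=\delta$, elementarity of $j$ gives $j(\mathcal{S})=\mathcal{S}$, and hence $j(S_\alpha)=S_{j(\alpha)}$ for every $\alpha<\delta$; continuity of $j$ at $\omega$-cofinality ordinals (since $j(\omega)=\omega$) then yields $j[S_\alpha]\subseteq S_{j(\alpha)}$. The core of the proof, mirroring the extendible-case argument of \cref{thm:extendible_uniqueness}, is to show that $j(\alpha)=\alpha$ for all $\alpha<\delta$. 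I would do this by using $\omega_1$-strong compactness to extend $\mathcal C_\delta\cap\HOD$ to a countably complete ultrafilter $\mathcal U\in V$ concentrating on $S^\delta_\omega$, and then analyze the two elementary embeddings $e_0=i_\mathcal U\restriction\HOD$ and $e_1=(i_\mathcal U\circ j)\restriction\HOD$ of $\HOD$ into $\HOD^{M_\mathcal U}$. Both send $\delta$ to $i_\mathcal U(\delta)$ and have image-supremum $\delta_*=\sup i_\mathcal U[\delta]$, so they are similar at $\delta$. Applying the partition argument inside $M_\mathcal U$ to $i_\mathcal U(\mathcal{S})$, using the $\omega$-club $i_\mathcal U[\delta]\subseteq\delta_*$, forces $e_0\restriction\delta=e_1\restriction\delta$; injectivity of $i_\mathcal U$ on ordinals then gives $j\restriction\delta=\mathrm{id}\restriction\delta$, contradicting $\kappa_0<\delta$.

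The main obstacle is the final reflection step: transferring the stationary partition $i_\mathcal U(\mathcal{S})$ at $i_\mathcal U(\delta)$ down to $\delta_*=\sup i_\mathcal U[\delta]$ inside $M_\mathcal U$, and using this to pin down the values $e_0(\alpha)$ and $e_1(\alpha)$ from the reflected $\omega$-stationary traces of the parts of $i_\mathcal U(\mathcal{S})$ on $\delta_*$. This is precisely the technical point where the uniqueness-of-embeddings machinery of \cite{UniqueEmbeddings} must be adapted: unlike \cref{thm:extendible_uniqueness}, we do not have an extendible to provide ambient $V_\alpha$-elementarity, and the argument must be carried out using only the countably complete ultrafilter $\mathcal U$ produced by $\omega_1$-strong compactness together with the \HOD-internal canonicity of $\mathcal{S}$ supplied by the \HOD\ hypothesis.
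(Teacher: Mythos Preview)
Your approach attempts a direct Kunen-style contradiction at a single regular fixed point \(\delta\), whereas the paper proceeds by showing that any nontrivial \(j:\HOD\to\HOD\) must have a proper class of generators (\cref{lma:hod_embedding_proper_class}), then factoring \(j\) through the length-\(\kappa\) extender and using the \((\omega_1,\kappa)\)-cover property together with SCH in \(\HOD\) (\cref{lma:club_amenable_sch}, \cref{lma:sch_extender}, \cref{lma:extender_cover}) to conclude that \(j\) is an extender embedding, a contradiction.

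There is a genuine gap in your argument, and it occurs \emph{before} the step you flag as the main obstacle. You assert that ``because \(\mathcal S\) is \(\OD\) from \(\delta\) and \(j(\delta)=\delta\), elementarity of \(j\) gives \(j(\mathcal S)=\mathcal S\).'' This inference would be valid if \(j\) were elementary \(V\to V\), but \(j\) is only elementary \(\HOD\to\HOD\). To conclude \(j(\mathcal S)=\mathcal S\) you need \(\mathcal S\) to be definable \emph{in \(\HOD\)} from parameters fixed by \(j\). The partition produced by \cref{thm:stationary_partitions} has \(V\)-stationary pieces, but ``\(V\)-stationary'' is not a \(\HOD\)-expressible notion, so there is no reason such a partition is \(\HOD\)-definable from \(\delta\). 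If instead you take the \(\HOD\)-least partition of \((S^\delta_\omega)^{\HOD}\) into \(\HOD\)-stationary pieces, then \(j(\mathcal S)=\mathcal S\) does follow, but now the pieces need not be \(V\)-stationary (indeed \((S^\delta_\omega)^{\HOD}\) itself need not be \(V\)-stationary), and your Solovay reflection step through \(i_{\mathcal U}\) requires \(V\)-stationarity of each \(S_\alpha\) to ensure that \(e_0[S_\alpha]\) is stationary in \(\delta_*\). One might hope to use the parameter \(F=\mathcal C_{\delta,\omega}\cap\HOD\in\HOD\) to make \(V\)-stationarity visible to \(\HOD\), but then you would need \(j(F)=F\), and there is no evident reason for this either. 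This tension between ``\(\HOD\)-definable from \(\delta\)'' and ``\(V\)-stationary pieces'' is exactly what the paper's generator/SCH argument sidesteps.

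A minor additional point: in your first paragraph, \(\lambda=\sup_n j^n(\lambda_0)\) is a \(\HOD\)-cardinal but need not be a \(V\)-cardinal, so \cref{cor:club_amenable_weak_cover} does not directly apply. This is easily repaired by choosing \(\lambda\) in the \(\omega\)-club of singular strong limit \(V\)-cardinals fixed by \(j\).
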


We need to prove some preliminary lemmas.

\begin{lma}\label{lma:hod_embedding_proper_class}
    If \(j : \HOD\to \HOD\) is a nontrivial elementary embedding, then \(j\) has a proper class of generators.\qed
\end{lma}

\begin{lma}\label{lma:club_amenable_sch}
    Suppose \(\kappa\) is \(\omega\)-strongly compact and
    \(H\) is an \(\omega\)-club amenable model such that
    arbitrarily large regular cardinals are not measurable in \(H\).
    Then the Singular Cardinals Hypothesis holds in \(H\) above \(\kappa^+ \cdot (\kappa^\omega)^H\).
    \begin{proof}
        By Silver's theorem, 
        it suffices to show that in \(H\), for all singular cardinals
        \(\lambda > 2^{<\kappa}\) with \(\cf(\lambda) = \omega\),
        \(\lambda^\omega = \lambda^+\). In fact,
        we will show that for all \(H\)-regular \(\delta \geq \kappa^+\),
        there is a set \(C\subseteq P_\kappa(\delta)\cap H\)
        in \(H\) such that \(|C|^H = \delta\) and
        every countable \(\sigma\subseteq \delta\)
        is contained in some \(\tau \in H\).
        Then in particular, \(([\delta]^\omega)^H\subseteq
        \bigcup_{\sigma\in C} ([\sigma]^\omega)^H\)
        and hence \((\delta^\omega)^H = (\kappa^\omega)^H\cdot \delta\).

        To define \(C\), we first split
        \(S = (S^\delta_{<\kappa})^H\) into \(\delta\)-many \(\mathcal C_{\delta,\omega}\)-positive
        sets in \(H\). Note that \(S\) contains \(S^\delta_\omega\) by \cref{lma:club_amenable_cover}, 
        and hence \(S\)
        has full measure with respect to \(\mathcal C_{\delta,\omega}\). 
        Now one can use a standard argument to partition 
        \(S\) into \(\delta\)-many \(\mathcal C_{\delta,\omega}\)-positive sets
        in \(H\). Working in \(H\), for each \(\alpha\in S\), fix a club
        \(c_\alpha\subseteq \alpha\) of ordertype less than \(\kappa\). For
        \(\xi < \kappa\), let \(f_\xi(\alpha) = c_\alpha(\xi)\) for any \(\alpha\) such that
        \(\xi < \ot(c_\alpha)\). 

        We claim that
        there is some \(\xi < \kappa\) such that for unboundedly many \(\beta < \delta\), 
        \(f_\xi^{-1}\{\beta\}\) is \(\mathcal C_{\delta,\omega}\)-positive. 
        Otherwise, using that \(C_{\delta,\omega}\) is weakly normal and \(f_\xi\) is regressive, 
        for each \(\xi < \kappa\), there is an ordinal \(\gamma_\xi\) such that
        \(f_\xi(\alpha) < \gamma_\xi\) for an \(\omega\)-club of \(\alpha\in S\). 
        Since \(\cf(\delta) > \kappa\),
        there is then a single \(\omega\)-club of \(\alpha\) 
        such that for all \(\xi < \kappa\), 
        \(f_\xi(\alpha) < \gamma_\xi\) for all appropriate \(\alpha\). Letting
        \(\gamma = \sup_{\xi < \kappa} \gamma_\xi < \delta\), we see that 
        for an \(\omega\)-club of \(\alpha\), for all 
        appropriate \(\xi < \kappa\), \(f_\xi(\alpha) < \gamma\).
        But if \(\alpha > \gamma\) belongs to this club, then the fact that 
        \(c_\alpha(\xi) = f_\xi(\alpha) < \gamma\) for all \(\xi < \ot(c_\alpha)\)
        contradicts that \(c_\alpha\) was chosen to be unbounded in \(\alpha\).

        Now fix \(\xi < \kappa\) such that such that for unboundedly many \(\beta < \delta\), 
        \(f_\xi^{-1}\{\beta\}\) is \(\mathcal C_{\delta,\omega}\)-positive. 
        Since \(H\) is \(\omega\)-club amenable and
        \(f_\xi\in N\), the set \(B\) of such \(\beta\) belongs to \(H\),
        and so since \(\delta\) is regular in \(H\), the ordertype of \(B\) is \(\delta\).
        Finally, let \(S_\beta = f_\xi^{-1}\{\beta\}\).
        Then \(\langle S_\beta : \beta\in B\rangle\) is the desired stationary partition.

        Finally, for each \(\xi < \delta\) of uncountable cofinality, 
        let \(\sigma_\xi\) be the set of \(\beta \in B\) such that
        \(S_\beta\) is \(\mathcal C_{\delta,\xi}\)-positive.
        Then \(\langle \sigma_\xi : \xi < \delta\rangle\) is in \(H\) 
        by \(\omega\)-club amenability, and the argument of \cref{lma:club_amenable_cover}
        shows that every countable subset of \(\delta\) is contained in 
        \(\sigma_\xi\) for some \(\xi < \delta\). Therefore \(C = \{\sigma_\xi : \xi < \delta\}\)
        is as desired.
    \end{proof}
\end{lma}

\begin{lma}\label{lma:sch_extender}
    Suppose \(\delta\) is a cardinal, \(N\) and \(H\) are
    models with the \((\omega_1,\delta)\)-cover property,
    and \((2^\lambda)^H = \lambda^{+H}\) for all sufficiently large 
    strong limit cardinals of countable cofinality.
    Then any elementary embedding \(k : N \to H\) with
    \(\crit(k) \geq \delta\) is an extender embedding.
    \begin{proof}
        Note that \(k\) is continuous at ordinals of cofinality
        \(\omega\), since these have \(N\)-cofinality less than \(\delta\). 
        We will show that there cannot exist a singular strong cardinal \(\lambda\) with the
        following properties:
        \begin{itemize}
            \item \(\cf(\lambda) = \omega\).
            \item \(\lambda\) is a limit of generators of \(k\).
            \item \(k(\lambda) = \lambda\).
            \item \((2^\lambda)^H = \lambda^{+H}\).
        \end{itemize}
        We first point out that \(\lambda^{+H} = \lambda^{+N} = \lambda^+\).
        Using the \((\omega_1,\delta)\)-cover property, 
        \(\lambda^+ \leq |P_\delta(\lambda)\cap H| \leq (2^\lambda)^H 
        =\lambda^{+H}\).
        The argument for \(N\) is similar, using that 
        \((2^\lambda)^N = \lambda^{+N}\) by elementarity.

        Let \(\sigma\subseteq \lambda\) be a countable cofinal set,
        and let \(\tau\in H\) be a cover of \(\sigma\) of size less than \(\delta\). 
        Let \(U\) be the \(N\)-ultrafilter on \(P_\delta(\lambda)\cap N\)
        derived from \(k\) using \(\tau\). Let \(j_U :N\to P\) be the ultrapower embedding and
        \(i : P\to H\) be the factor embedding. Then \(\tau = i(\id_U)\in \ran(i)\), and
        so since \(|\tau| < \delta\) and \(\crit(i) < \delta\), 
        \(\tau\subseteq \ran(i)\). As a consequence, \(\lambda\) is a limit of generators of \(j_U\),
        which implies \(\lambda_U\geq \lambda\).
        But \((2^\lambda)^H = \lambda^{+H}\), and so by elementarity
        \((2^\lambda)^N = \lambda^{+N}\). Thus 
        \(\lambda_U\leq |P_\delta(\lambda)\cap N| \leq (2^\lambda)^N \leq \lambda^{+N}\).
        Since \(\lambda \leq \lambda_U\leq\lambda^{+N}\),
        we must have \(j_U(\lambda) > \lambda\) or \(j_U(\lambda^{+N}) > \lambda^{+N}\).
        This implies \(k(\lambda) > \lambda\) or \(k(\lambda^{+N})> \lambda^{+N}\),
        but contradicting either that \(k(\lambda) = \lambda\) (by assumption) or that 
        \(k(\lambda^{+N}) = \lambda^{+H} = \lambda^{+N}\)
        (by the previous paragraph).

        Since there is no such \(\lambda\), the class of limits of generators of \(k\)
        does not intersect the \(\omega\)-closed unbounded class of singular strong limit 
        fixed points of \(k\) of countable cofinality satisfying \((2^\lambda)^H = \lambda^{+H}\). 
        It follows that the class of limits of generators of \(k\) not closed unbounded,
        which means that the class of generators of \(k\) is bounded, or in other words,
        \(k\) is an extender embedding.
    \end{proof}
\end{lma}

\begin{lma}\label{lma:extender_cover}
    Suppose \(i : H\to N\) is an extender embedding of length \(\lambda\)
    where \(\lambda\) is an \(H\)-cardinal of uncountable cofinality. 
    If \(H\) has the \((\omega_1,\lambda)\)-cover property,
    then so does \(N\).
\end{lma}

\begin{proof}[Proof of \cref{thm:hod_rigid}]
    Suppose \(j : \HOD\to \HOD\) is an elementary embedding and
    assume towards a contradiction that it is nontrivial. 
    Let \(\kappa\) be the least \(\omega_1\)-strongly
    compact cardinal. By \cref{lma:club_amenable_cover}, \(\HOD\) has the
    \((\omega_1,\kappa)\)-cover property. Let 
    \(i : \HOD\to N\) be given by the extender of length \(\kappa\) derived from \(j\),
    and let \(k : N\to \HOD\) be the factor embedding, so \(\crit(k)\geq \kappa\).
    
    By \cref{lma:extender_cover}, \(N\) has the 
    \((\omega_1,\kappa)\)-cover property.
    \cref{lma:club_amenable_sch} implies that 
    \(\HOD\) satisfies the cardinal arithmetic
    condition of \cref{lma:sch_extender}, and so \(k\) is an extender embedding.
    Thus \(j = k\circ i\) is an extender embedding, 
    contradicting \cref{lma:hod_embedding_proper_class}.
\end{proof}
\section{Large cardinals in HOD}
\subsection{HOD under AD}\label{section:ad}
Recall that a cardinal \(\kappa\) is \textit{\((\nu,\delta)\)-strongly compact}
if there is an elementary embedding \(j : V\to M\) such that \(\crit(j)\geq \nu\)
and \(M\) has the \((\delta^+,j(\kappa))\)-cover property.
An ultrafilter \(U\) 
\textit{witnesses that \(\kappa\) is \((\nu,\delta)\)-strongly compact} if \(\crit(j_U) \geq \nu\)
and \(M_U\) has the \((\delta^+,j(\kappa))\)-cover property. If \(N\) is an inner model and \(U\)
is an ultrafilter, then \textit{\(U\) witnesses that \(\kappa\) is \(\delta\)-strongly compact in \(N\)} 
if \(U\cap N\in N\)
and \(U\cap N\) witnesses that \(\kappa\) is \(\delta\)-strongly compact in \(N\).

We say an inner model \(N\) has the \textit{\(\kappa\)-cover property below \(\delta\)}
if \(P_\kappa(\delta)\cap N\) is cofinal in \(P_\kappa(\delta)\);
\(N\) has the \textit{\(\kappa\)-approximation property below \(\delta\)}
if \(N\) contains every \(A\subseteq \delta\) such that \(A\cap \sigma\in N\) for all \(\sigma\in P_\kappa(\delta)\).
\begin{thm}
    Assume \(\AD^+\) and \(V = L(P(\mathbb R))\).
    Suppose \(\kappa < \Theta\) is a regular cardinal and \(\delta\geq \kappa\) is a
    \(\HOD\)-regular ordinal. Then the following are equivalent:
    \begin{enumerate}[(1)]
        \item \((S^\delta_{<\kappa})^\HOD\) is stationary.\label{item:stat}
        \item \(\HOD\) has the \(\kappa\)-cover property below \(\delta\).\label{item:cover}
        \item \(\HOD\) has the \(\kappa\)-cover  
        and \(\kappa\)-approximation properties below \(\delta\).\label{item:appx}
        \item \(\mathcal C_{\delta,\omega}\) witnesses 
        that \(\kappa\) is \(\delta\)-strongly compact in \(\HOD\).\label{item:club}
        \item Some countably complete ultrafilter witnesses 
        \(\kappa\) is \((\omega_1,\delta)\)-strongly compact in \(\HOD\).\label{item:bagmag}
        \item \(\HOD\) has the \((\omega_1,\kappa)\)-cover property below \(\delta\).\label{item:cover2}
        \item \(S^\delta_\omega\subseteq (S^\delta_{<\kappa})^\HOD\).\label{item:sub}
    \end{enumerate}
    \begin{proof}
        \textit{\ref{item:stat} implies \ref{item:cover}} Let \(T = (S^\delta_{<\kappa})^\HOD\). We claim
        that \(S^\delta_\omega \cap T\) is stationary.
        Fix a sequence \(\langle c_\alpha : \alpha\in T\rangle \in \HOD\)
        such that \(c_\alpha\) is a closed cofinal subset of \(\alpha\) of ordertype
        \(\cf^\HOD(\alpha)\). 
        Let \[S = \{\beta \in S^\delta_\omega : \exists \alpha\in T\, \sup (c_\alpha\cap \beta) = \beta\}\]
        Suppose \(C\) is closed unbounded, and we will show \(C\cap S\neq \emptyset\).
        Since \(T\) is stationary,
        there is some \(\beta\) in \(\acc(C)\) such that 
        \(\sup (c_\alpha\cap \beta) = \beta\). We claim that
        the least such \(\beta\) has countable cofinality.
        If not, then \(\beta\) has uncountable cofinality, 
        so \(C\cap c_\alpha\cap\beta\) is closed unbounded.
        But fixing any \(\beta'\in \acc(C\cap c_\alpha \cap \beta)\),
        we have \(\beta'\in \acc(C)\) and \(\sup(c_\alpha\cap \beta') = \beta'\),
        contrary to the minimality of \(\beta\).

        Next, we construct a sequence
        \(\langle \sigma_\xi : \xi\in T\rangle\)
        such that \(\sigma_\xi\in P_\kappa(\xi)\) 
        and for all \(\alpha\), for \(\mathcal C_{\delta,\omega}\)-almost
        all \(\xi\), \(\alpha\in \sigma_\xi\). This proceeds as in \cref{thm:cf_card},
        using however that \(\mathcal C_{\delta,\omega}\) is an ultrafilter.
        As in \cref{thm:cf_card},
        one can use this sequence to show \(\cf(\delta) = |\delta|\). Let \(C\subseteq \delta\) be a closed cofinal
        set of ordertype \(\cf(\delta)\).
        Then \(\delta = \bigcup_{\xi\in C}\sigma_\xi\). Therefore
        \(|\delta| = |C|\cdot\sup_{\xi \in C} \sigma_\xi\), so as in 
        \cref{thm:cf_card}, \(|\delta| = |C| = \cf(\delta)\).

        %We remark that \(|\delta| = \kappa\). Clearly \(\kappa \leq |\delta|\).
        %On the other hand, \(\mathcal C_{\delta,\omega}\cap \HOD\) is \(|\delta|\)-complete and
        %\((\kappa,\delta)\)-regular ultrafilter in \(\HOD\), so \(|\delta| \leq \kappa\).

        Finally, suppose \(\sigma\in P_\kappa(\delta)\). 
        The set \(\{\xi \in T : \sigma\subseteq \sigma_\xi\}\)
        is the intersection of fewer than \(\kappa\)-many sets in \(\mathcal C_{\delta,\omega}\),
        and so it belongs to \(\mathcal C_{\delta,\omega}\).
        It follows that there is some \(\xi\in T\) such that \(\sigma\subseteq \sigma_\xi\).
        This shows that \(\HOD\) has the \(\kappa\)-cover property below \(\delta\).

        \textit{\ref{item:cover} implies \ref{item:appx}.} 
        In \(\HOD\), 
        let \(\langle X_\alpha : \alpha < \delta\rangle\) be a \(\kappa\)-independent
        family of subsets of some set \(S\). Suppose
        \(A\subseteq \delta\) and \(A\cap \sigma\in \HOD\) for all \(\sigma\in P_\kappa(\delta)\cap \HOD\).
        For \(\alpha < \delta\), let \[Y_\alpha =\begin{cases} X_\alpha&\text{if }\alpha\in A\\
            S\setminus X_\alpha&\text{otherwise}\end{cases}\]
        Suppose \(\sigma\subseteq \delta\) are disjoint. We will show that
        \(\bigcap_{\alpha\in \sigma}Y_\alpha\neq \emptyset\).
        First, let \(\tau\in \HOD\) cover \(\sigma\). Then
        by our assumption on \(A\), \(\tau\cap A\in \HOD\).
        It follows that \(\langle Y_\alpha : \alpha\in \tau\rangle\in \HOD\),
        and therefore since  \(\langle X_\alpha : \alpha < \delta\rangle\) is \(\kappa\)-independent
        in \(\HOD\), \(\bigcap_{\alpha\in \tau}Y_\alpha\neq \emptyset\).
        Since \(\sigma\subseteq \tau\), \(\bigcap_{\alpha\in \sigma}Y_\alpha\neq \emptyset\).
        Let \(F\) be the filter generated by \(\{Y_\alpha : \alpha < \delta\}\).
        Then \(F\) is a \(\kappa\)-complete filter on a wellorderable set, 
        and so \(F\) extends to a countably complete
        ultrafilter \(U\). Applying AD, \(U\cap \HOD\in \HOD\), and therefore
        \(A = \{\alpha < \delta : X_\alpha\in U\}\) belongs to \(\HOD\) as well. 

        \textit{\ref{item:appx} implies \ref{item:club}.} 
        \((S^\delta_{<\kappa})^\HOD\in \mathcal C_{\delta,\omega}\) and
        \(C_{\delta,\omega}\) is \(\kappa\)-complete since \(\cf(\delta)\geq \kappa\).
        Therefore \(\mathcal C_{\delta,\omega}\) is a \(\kappa\)-complete,
        \((\kappa,\delta)\)-regular ultrafilter on \(\delta\), and so it witnesses
        that \(\kappa\) is \(\delta\)-strongly compact.

        \textit{\ref{item:club} implies \ref{item:bagmag}.} 
        Trivial.

        \textit{\ref{item:bagmag} implies \ref{item:cover2}.} 
        Let \(U\) be a countably complete ultrafilter on \(\delta\)
        such that \(U\cap \HOD\) witnesses that \(\kappa\) is \((\omega_1,\delta)\)-strongly
        compact in \(\HOD\). Let \(f : \delta\to P_\kappa(\delta)\cap \HOD\) 
        push \(U\cap \HOD\) forward to a fine ultrafilter in \(\HOD\). 
        Let \(\mathcal U = f_*(U)\). Then \(\mathcal U\) is a fine countably complete
        ultrafilter, and therefore for all \(\sigma\in P_{\omega_1}(\delta)\),
        the set \(\{\tau \in P_\kappa(\delta) : \sigma\subseteq \tau\}\)
        belongs to \(\mathcal U\). Since \(P_\kappa(\delta)\cap \HOD\in \mathcal U\),
        it follows that there is some \(\tau\in P_\kappa(\delta)\cap \HOD\)
        such that \(\sigma\subseteq \tau\). This shows that \(\HOD\) has the
        \((\omega_1,\delta)\)-cover property.

        \textit{\ref{item:cover2} implies \ref{item:sub}.} Trivial.

        \textit{\ref{item:sub} implies \ref{item:stat}.} Trivial.
    \end{proof}
\end{thm}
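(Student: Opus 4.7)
The plan is to prove this as a cyclic chain of implications \ref{item:stat} $\Rightarrow$ \ref{item:cover} $\Rightarrow$ \ref{item:appx} $\Rightarrow$ \ref{item:club} $\Rightarrow$ \ref{item:bagmag} $\Rightarrow$ \ref{item:cover2} $\Rightarrow$ \ref{item:sub} $\Rightarrow$ \ref{item:stat}, leveraging three consequences of $\AD^+ + V=L(P(\mathbb R))$ that are not available in ZFC: (a) Solovay's theorem that $\mathcal C_{\delta,\omega}$ is an ultrafilter whenever $\delta$ is regular of uncountable cofinality below $\Theta$; (b) every $\kappa$-complete filter on a wellorderable set extends to a countably complete ultrafilter; (c) for every ultrafilter $U$ on an ordinal, $U\cap \HOD\in \HOD$. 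These let one convert stationarity into covering, convert covering into ultrafilter-theoretic data, and move that data into \HOD.

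The bulk of the work is in \ref{item:stat} $\Rightarrow$ \ref{item:cover}. The strategy is to adapt the proof of \cref{thm:cf_card} inside \HOD, taking $T = (S^\delta_{<\kappa})^\HOD$ and fixing $\langle c_\alpha : \alpha\in T\rangle\in\HOD$ with $c_\alpha$ a club in $\alpha$ of ordertype $<\kappa$. First I would upgrade the hypothesis by showing $S^\delta_\omega\cap T$ is stationary: given closed unbounded $C$, pick $\alpha\in T\cap \acc(C)$ and consider the least $\beta\in\acc(C)$ with $\sup(c_\alpha\cap\beta)=\beta$; the minimality argument forces $\cf(\beta)=\omega$. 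Next I would build $\langle \epsilon_\alpha : \alpha<\delta\rangle$ continuously so that $\epsilon_{\alpha+1}=\beta_{\epsilon_\alpha}$ for a Fodor-theoretic threshold $\beta_\xi$, and define $\sigma_\xi = \{\alpha < \delta : c_\xi\cap[\epsilon_\alpha,\epsilon_{\alpha+1})\neq\emptyset\}$. The key improvement over \cref{thm:cf_card} is that under $\AD^+$, $\mathcal C_{\delta,\omega}$ is a $\kappa$-complete ultrafilter, so each $S_\alpha=\{\xi : \alpha\in \sigma_\xi\}$ lies in $\mathcal C_{\delta,\omega}$ itself. Consequently, given $\sigma\in P_\kappa(\delta)$, $\bigcap_{\alpha\in\sigma} S_\alpha\in\mathcal C_{\delta,\omega}$ by $\kappa$-completeness, so some $\sigma_\xi\in P_\kappa(\xi)\cap\HOD$ covers $\sigma$.

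For \ref{item:cover} $\Rightarrow$ \ref{item:appx} I would work with a $\kappa$-independent family $\langle X_\alpha : \alpha<\delta\rangle\in\HOD$. Given $A\subseteq\delta$ approximated by \HOD, the sets $Y_\alpha=X_\alpha$ or $S\setminus X_\alpha$ form a $\kappa$-complete filter: finite-many-intersections are witnessed by covering $\sigma$ by $\tau\in\HOD$, observing $\tau\cap A\in\HOD$, and then invoking independence inside \HOD. Extend this filter via (b) to a countably complete ultrafilter $U$ in $V$, apply (c) to obtain $U\cap\HOD\in\HOD$, and recover $A=\{\alpha : X_\alpha\in U\}\in\HOD$. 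The step \ref{item:appx} $\Rightarrow$ \ref{item:club} then falls out: cover gives $S^\delta_\omega\subseteq (S^\delta_{<\kappa})^\HOD\in\mathcal C_{\delta,\omega}$, the ultrafilter (a) is $\kappa$-complete and $(\kappa,\delta)$-regular on $\delta$ (regularity reading off the cover, using the $\sigma_\xi$ sequence), and approximation moves $\mathcal C_{\delta,\omega}\cap\HOD$ inside \HOD so it genuinely witnesses $\delta$-strong compactness of $\kappa$ there.

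The remaining links are short. \ref{item:club} $\Rightarrow$ \ref{item:bagmag} is trivial. For \ref{item:bagmag} $\Rightarrow$ \ref{item:cover2} I push a witnessing countably complete $U$ forward by an \HOD-function $f:\delta\to P_\kappa(\delta)\cap\HOD$ with $f_*(U\cap\HOD)$ fine in \HOD; fineness plus countable completeness of $f_*(U)$ ensures every $\sigma\in P_{\omega_1}(\delta)$ is covered by some $\tau\in P_\kappa(\delta)\cap\HOD$. The closing implications \ref{item:cover2} $\Rightarrow$ \ref{item:sub} (cover a countable cofinal subset of $\alpha$ by an \HOD-set of \HOD-size $<\kappa$) and \ref{item:sub} $\Rightarrow$ \ref{item:stat} (since $S^\delta_\omega$ is stationary) are immediate. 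I expect the main obstacle to be the quantitative control in the first implication: verifying that $|\sigma_\xi|<\kappa$ throughout, that the continuous construction of $\langle\epsilon_\alpha\rangle$ stays below $\delta$ (using $\HOD$-regularity of $\delta$), and that the ultrafilter property of $\mathcal C_{\delta,\omega}$ is used correctly to make the covering argument go through rather than merely stationarity, since in ZFC this step would fail.
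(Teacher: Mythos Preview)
Your proposal is correct and follows essentially the same approach as the paper: the same cyclic chain of implications, the same adaptation of \cref{thm:cf_card} using that $\mathcal C_{\delta,\omega}$ is a $\kappa$-complete ultrafilter under $\AD^+$, the same $\kappa$-independent family plus ultrafilter-extension argument for approximation, and the same push-forward for \ref{item:bagmag} $\Rightarrow$ \ref{item:cover2}. The only minor difference is that for \ref{item:appx} $\Rightarrow$ \ref{item:club} you invoke the approximation property to place $\mathcal C_{\delta,\omega}\cap\HOD$ in $\HOD$, whereas the paper tacitly relies on the $\AD$ fact that restrictions of ultrafilters on ordinals lie in $\HOD$; either route works.
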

In the context of \(\AD^+ + V = L(P(\mathbb R))\), 
Woodin's \textit{\(\HOD\)-ultrafilter conjecture} asserts that every countably
complete ultrafilter of \(\HOD\cap V_\Theta\)
extends to a countably complete ultrafilter.
\begin{thm}\label{thm:hod_uf}
    Assume \(\AD^+\), \(V = L(P(\mathbb R))\), and the \(\HOD\)-ultrafilter conjecture.
    Suppose \(\kappa < \Theta\) is a regular cardinal and \(\delta\geq \kappa\) is a
    \(\HOD\)-regular ordinal. Then \(\kappa\) is \(\delta\)-strongly compact in \(\HOD\)
    if and only if \(\kappa\) is \((\omega_1,\delta)\)-strongly compact in \(\HOD\).\qed
\end{thm}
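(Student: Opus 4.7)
The forward direction is immediate from the definitions, since $\kappa \geq \omega_1$ forces every $\delta$-strong compactness witness to satisfy the weaker critical-point condition. The real content is the reverse direction, and the plan is to exhibit it as a short corollary of the preceding theorem together with the $\HOD$-ultrafilter conjecture. Specifically, I will use clauses (4) and (5) of that theorem, which assert the equivalence of ``$\mathcal{C}_{\delta,\omega}$ witnesses that $\kappa$ is $\delta$-strongly compact in $\HOD$'' and ``some countably complete ultrafilter witnesses that $\kappa$ is $(\omega_1,\delta)$-strongly compact in $\HOD$'' (both phrased in $V$ via the paper's ``witnesses in $N$'' terminology).

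Concretely, assume $\kappa$ is $(\omega_1,\delta)$-strongly compact in $\HOD$, and let $U \in \HOD$ be an internally witnessing ultrafilter, which may be taken on $P_\kappa(\delta)^\HOD$. Under $\AD^+ + V = L(P(\mathbb{R}))$, this set has small rank in $\HOD$ relative to $\Theta$, so $U \in \HOD \cap V_\Theta$. Applying the $\HOD$-ultrafilter conjecture, $U$ extends to a countably complete ultrafilter $U^\ast$ in $V$. Because $U$ is already an ultrafilter in $\HOD$ and $U \subseteq U^\ast$, we automatically get $U^\ast \cap \HOD = U$; hence $U^\ast$ witnesses, in the sense defined in the paper, that $\kappa$ is $(\omega_1,\delta)$-strongly compact in $\HOD$.

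This is exactly clause (5) of the previous theorem. Invoking the equivalence yields clause (4): $\mathcal{C}_{\delta,\omega}$ witnesses that $\kappa$ is $\delta$-strongly compact in $\HOD$. In particular, $\kappa$ is $\delta$-strongly compact in $\HOD$, completing the proof.

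The only substantive point to verify is that the witnessing ultrafilter $U$ actually lies in $\HOD \cap V_\Theta$ so that the $\HOD$-ultrafilter conjecture is applicable; this is routine for $\delta$ in the intended range, given the standard bounds on $\HOD$-power sets below $\Theta$ under $\AD^+$. Once that is in hand, no further combinatorics is needed: the theorem is a mechanical consequence of the equivalences already established.
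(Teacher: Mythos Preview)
Your proposal is correct and follows exactly the route the paper intends: the theorem is stated with a bare \qed\ and no further argument, indicating it is an immediate corollary of the preceding seven-way equivalence (specifically the equivalence of clauses (4) and (5)) together with the \(\HOD\)-ultrafilter conjecture, precisely as you spell out. The one point you flag---that the internal witness \(U\) lies in \(\HOD\cap V_\Theta\) so that the conjecture applies---is indeed the only thing to check and is routine in the intended range.
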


\subsection{Weak extender models}\label{section:wem}
\begin{defn}
    If \(U\) is a \(\kappa\)-complete ultrafilter on \(\lambda\)
    and \(\sigma\in P_\kappa(P(\lambda))\), then
    \(A_U(\sigma) = \bigcap_{A\in U\cap \sigma} A\) and \(\chi_U(\sigma) = \min A_U\).
    Suppose \(\mathcal F\) is a filter on \(P_\kappa(P(\lambda))\)
    and \(U,W\) are \(\kappa\)-complete ultrafilters on \(\lambda\).
    Then \(U <_\mathcal F W\)
    if \(\chi_U(\sigma) < \chi_W(\sigma)\)
    for \(\mathcal F\)-almost all \(\sigma\in P_\kappa(P(\lambda))\).
\end{defn}
\begin{lma}
    Suppose \(\mathcal F\) is a filter on \(P_\kappa(P(\lambda))\).
    Then \(<_\mathcal F\) is a
    strict partial order.
    If \(\mathcal F\) is countably complete, then \(<_\mathcal F\)
    is wellfounded. 
    If \(\mathcal F\) is an ultrafilter, then \(<_\mathcal F\) is linear.
\end{lma}
Thus \(<_\mathcal F\) is a wellorder if 
\(\mathcal F\) is a countably complete ultrafilter
on \(P_\kappa(P(\lambda))\).
The Ultrapower Axiom, which we will not discuss here,
implies that if 
\(\mathcal F\) is the closed unbounded filter on \(P_{\omega_1}(P(\lambda))\),
then \(<_\mathcal F\) is a wellorder.
This is also a consequence of \(\AD_\mathbb R\)
if \(\lambda < \Theta\).

We will use the order \(<_\mathcal F\) in the proof of the following theorem.
\begin{thm}\label{thm:min_appx}
    Suppose \(\kappa\) is strongly compact and \(N\) is an inner model of \ZFC\
    with the \(\kappa\)-cover property. Then there is a minimum extension of \(M\)
    to a model of \ZFC{ }with the \(\kappa\)-approximation property.
\end{thm}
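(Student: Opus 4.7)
My plan is to identify $N^*$ explicitly as the smallest inner model of \ZFC\ extending $N$ together with a canonical class of ``trace'' data for the $\kappa$-complete ultrafilters of $V$ that are amenable to $N$. Since $\kappa$ is strongly compact, I would fix for each ordinal $\lambda$ a fine $\kappa$-complete ultrafilter $\mathcal F_\lambda$ on $P_\kappa(P(\lambda))$, so that by the preceding lemma $<_{\mathcal F_\lambda}$ is a wellorder of the $\kappa$-complete ultrafilters on $\lambda$. Call such a $U$ \emph{$N$-amenable} when $U \cap N \in N$, and enumerate the $N$-amenable $\kappa$-complete ultrafilters on $\lambda$ in $<_{\mathcal F_\lambda}$-order as $\langle U^\lambda_\xi : \xi < \theta_\lambda\rangle$. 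Then I would set
\[ N^* = L\bigl[N, \langle \chi_{U^\lambda_\xi} : \lambda \in \Ord,\ \xi < \theta_\lambda \rangle\bigr]. \]

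To verify that $N^*$ has the $\kappa$-approximation property, I would take $A \subseteq \lambda$ with $A \cap \sigma \in N^*$ for every $\sigma \in P_\kappa(\lambda) \cap N$, and use strong compactness to fix a fine $\kappa$-complete ultrafilter $\mathcal U$ on $P_\kappa(\lambda)$ concentrating on $P_\kappa(\lambda) \cap N$; from $\langle A \cap \sigma : \sigma \in P_\kappa(\lambda) \cap N\rangle$ I would extract an $N$-amenable ultrafilter $U_A$ whose $\chi$-trace, together with the $N^*$-data already present, recovers $A$ inside $N^*$. For minimality, let $W \supseteq N$ be any \ZFC\ extension with the $\kappa$-approximation property; each trace $\chi_{U^\lambda_\xi}$ is itself $\kappa$-approximated by its restrictions to sets in $P_\kappa(P(\lambda)) \cap N \subseteq W$, so the approximation property of $W$ forces the full trace to lie in $W$, giving $N^* \subseteq W$.

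The main obstacle will be verifying the approximation property of $N^*$: one must show that every approximable $A$ is actually reconstructible from the canonical trace data. The wellfoundedness of $<_{\mathcal F_\lambda}$ is crucial here, guaranteeing that the ultrafilter $U_A$ extracted from $A$ appears in the enumeration used to build $N^*$, so that no global choice on a proper class of ultrafilters is needed. A secondary technical point is that $N^*$ must itself satisfy \ZFC, which I expect to follow once the trace class is recognized as a definable class indexed by ordinals and $N^*$ is written as a standard $L[\,\cdot\,]$-style construction over $N$.
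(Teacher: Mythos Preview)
Your construction has the right ingredients but a genuine gap in the verification of the \(\kappa\)-approximation property. You propose to recover a set \(A\) that is \(\kappa\)-approximated by \(N^*\) by extracting an \emph{\(N\)-amenable} ultrafilter \(U_A\) from the approximations \(A\cap\sigma\). But these approximations lie only in \(N^*\), not in \(N\): the coding argument behind \cref{lma:appx_coding} builds a filter from the sets \(A\cap\sigma\) and extends it to an ultrafilter, and if \(A\cap\sigma\in N^*\setminus N\) for some \(\sigma\), that ultrafilter is \(N^*\)-amenable rather than \(N\)-amenable. Your \(N^*\) only absorbs the traces of \(N\)-amenable ultrafilters, so you have no reason to expect \(U_A\) (or any witness for \(A\)) to appear in the data defining \(N^*\). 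In short, one pass over \(N\) yields a model containing every set \(\kappa\)-approximated by \(N\), but possibly creates new small sets and hence new \(\kappa\)-approximated sets that are not captured.

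The paper closes this gap by iterating your step: from \(N_\gamma\) one forms \(N_{\gamma+1}\) by adjoining the (wellordered) \(\kappa\)-complete \(N_\gamma\)-ultrafilters, continues transfinitely, and argues that for some \(\gamma<\kappa^+\) one has \(P_\kappa(\lambda)\cap N_\gamma = P_\kappa(\lambda)\cap N_{\gamma+1}\). At that stage being \(\kappa\)-approximated by \(N_{\gamma+1}\) coincides with being \(\kappa\)-approximated by \(N_\gamma\), and \emph{now} your coding argument goes through. A second point you underestimate is the dependence on the proper-class choice of the ultrafilters \(\mathcal F_\lambda\): the paper works locally below each strong limit \(\lambda\) and then invokes Hamkins's uniqueness theorem (\cref{thm:hamkins_uniqueness}) to show the local models cohere into a single canonical inner model, rather than relying on a global selection.
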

The proof uses the following lemmas.
\begin{lma}\label{lma:appx_coding}
    Suppose \(\delta\) is a regular cardinal,
    \(\kappa\) is \(\delta\)-strongly compact,
    \(N\) is an inner model of \ZFC\ with the \(\kappa\)-cover property,
    and \(M\) is an inner model containing \(N\) such that every \(\kappa\)-complete
    \(N\)-ultrafilter on \(\delta\)
    belongs to \(M\). Then every subset of \(\delta\) that is \(\kappa\)-approximated by 
    \(N\) belongs to \(M\).\qed
\end{lma}

\begin{thm}[Hamkins]\label{thm:hamkins_uniqueness}
    Suppose \(\kappa < \lambda\) are cardinals, \(\cf(\lambda)\geq \kappa\)
    and \(M\) and \(M'\) are inner models with the \(\kappa\)-approximation and cover
    properties below \(\lambda\) such that \(P_\kappa(\kappa^+)\cap M = P_\kappa(\kappa^+)\cap M'\).
    Then \(\Pbd(\lambda)\cap M = \Pbd(\lambda)\cap M'\).\qed
\end{thm}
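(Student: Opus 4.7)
The plan is to prove by induction on $\alpha < \lambda$ that $P(\alpha) \cap M = P(\alpha) \cap M'$; by the symmetry of the hypotheses it suffices to show $P(\alpha) \cap M \subseteq M'$. The central tool is the $\kappa$-approximation property of $M'$ below $\lambda$: for $A \in P(\alpha) \cap M$, to conclude $A \in M'$ it is enough to show $A \cap \sigma \in M'$ for every $\sigma \in P_\kappa(\alpha) \cap M'$.

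At the base $\alpha = \kappa^+$, the hypothesis $P_\kappa(\kappa^+) \cap M = P_\kappa(\kappa^+) \cap M'$ immediately places $\sigma \in M$, so $A \cap \sigma \in M$; since $|A \cap \sigma| < \kappa$ and $A \cap \sigma \subseteq \kappa^+$, the intersection lies in $P_\kappa(\kappa^+) \cap M = P_\kappa(\kappa^+) \cap M' \subseteq M'$. At the inductive step for $\alpha > \kappa^+$, the dichotomy is whether $\sigma \in P_\kappa(\alpha) \cap M'$ is bounded or cofinal in $\alpha$. The bounded case is routine: setting $\beta = \sup \sigma + 1 < \alpha$, the inductive hypothesis on $\beta$ gives $A \cap \beta \in P(\beta) \cap M \subseteq M'$, and then $A \cap \sigma = (A \cap \beta) \cap \sigma \in M'$ since $\sigma \in M'$.

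The cofinal case, which forces $\cf(\alpha) < \kappa$, is the heart of the matter. My approach is to produce a single $\tau \in P_\kappa(\alpha)$ with $\sigma \subseteq \tau$ lying in \emph{both} $M$ and $M'$; given such a $\tau$, its canonical increasing enumeration $g : \ot(\tau) \to \tau$ is available in each model, and then $A \cap \tau$ is determined by $g^{-1}[A] \subseteq \ot(\tau) < \kappa$, a set which lies in $P_\kappa(\kappa^+) \cap M = P_\kappa(\kappa^+) \cap M'$ by the base hypothesis and is therefore in both models; one concludes $A \cap \sigma = (A \cap \tau) \cap \sigma \in M'$. To build $\tau$, I would alternate the $\kappa$-cover properties: set $\tau_0 = \sigma$ and inductively choose $\tau_{2n+1} \in M$ and $\tau_{2n+2} \in M'$, each of size below $\kappa$, covering the previous term, then take $\tau = \bigcup_n \tau_n$. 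Because $\kappa$ has uncountable cofinality, $|\tau| < \kappa$, and a joint application of $\kappa$-approximation in each model---recursively invoking the inductive hypothesis on bounded test sets and the very coding apparatus being developed on cofinal test sets---should certify $\tau \in M \cap M'$.

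The main obstacle is exactly this certification. The $\kappa$-approximation check tests $\tau \cap \rho$ for arbitrary $\rho \in P_\kappa(\alpha) \cap M$ (and symmetrically for $M'$), and the hardest instances are those $\rho$ that are themselves cofinal in $\alpha$; handling these requires feeding the same coding apparatus recursively, a bootstrap whose termination must be verified carefully. I expect this is the technical core of the proof and the source of Hamkins' careful attention to the interaction between the approximation and cover properties.
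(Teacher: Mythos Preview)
The paper does not prove this theorem: it is attributed to Hamkins and closed with \qed, serving purely as a quoted input to the proof of \cref{thm:min_appx}. There is thus no argument in the paper to compare your proposal against.

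Your outline follows the standard route to Hamkins' uniqueness theorem, and you have correctly located the only nontrivial point: at stage $\alpha$ with $\cf(\alpha) < \kappa$, handling a cofinal test set $\sigma \in P_\kappa(\alpha) \cap M'$. Your instinct to interleave covers from $M$ and $M'$ to produce a common $\tau$ is exactly the right move, and the coding step via the increasing enumeration of $\tau$ is also correct once $\tau \in M \cap M'$ is known. However, the ``recursive bootstrap'' you propose for certifying $\tau \in M \cap M'$ is not merely delicate but genuinely circular as written: verifying $\kappa$-approximation for $\tau$ against a cofinal $\rho \in P_\kappa(\alpha)$ reduces to the very same problem at the very same ordinal $\alpha$, with no well-founded descent, so the recursion you gesture at does not terminate. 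You are right that this is the technical heart of Hamkins' argument, but your sketch stops short of the mechanism that breaks the circularity. The resolution in Hamkins' original proof exploits the fact that the base agreement extends (via approximation) to $P(\gamma) \cap M = P(\gamma) \cap M'$ for all $\gamma < \kappa^+$, so that \emph{codes} of small sets transfer freely between the models; one then arranges the alternating tower so that each $\tau_n$, as a subset of $\tau_{n+1}$, is determined by such a code, and membership of $\tau$ in each model is obtained by reconstructing the tower inside that model rather than by a direct approximation check on $\tau$.
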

    \begin{proof}[Proof of \cref{thm:min_appx}]
        Suppose \(\mathcal U\) is a \(\kappa\)-complete fine ultrafilter 
        on \(P_\kappa(\lambda)\) where \(\lambda\) is a 
        strong limit cardinal \(\lambda\) of cofinality at least \(\kappa\)
        and \(P_\kappa(\lambda)\cap N\in \mathcal U\). Suppose
        \(f : \lambda \to \Pbd(\lambda)\cap N\) is a surjection in \(N\).
        Let \(\tilde{\mathcal U}\) denote the pushforward of \(\mathcal U\) by the
        function \(\tilde f: P_\kappa(\lambda)\to P_\kappa(\Pbd(\lambda))\) 
        given by \(\tilde f(\sigma) = f[\sigma]\).
        Let \(\vec U = \vec U_{\mathcal U,f} = \langle U_\alpha : \alpha < \lambda\rangle\) enumerate
        the \(\kappa\)-complete \(N\)-ultrafilters on ordinals less than \(\lambda\) 
        in the wellorder \(<_{\tilde{\mathcal U}}\). Let 
        \(N'\) be the inner model \(L[\vec U,f]\) and let
        \(f'\) be 
        the increasing enumeration of \(\Pbd(\lambda)\cap N'\)
        in the following order:
        for \(a,b\in \Pbd(\lambda)\cap N'\), set
        \(a < b\) if either
        \(\sup(a) < \sup(b)\)
        or 
        \(a\) precedes \(b\) in the canonical wellorder of \(L[\vec U,f]\)
        and \(\sup(a) = \sup(b)\).

        Note that any model \(M\) with the \(\kappa\)-approximation property
        that contains \(N\) 
        must contain the set of \(\kappa\)-complete \(N\)-ultrafilters on ordinals less than \(\lambda\)
        and the order on it induced by \(\tilde {\mathcal U}\). Hence \(N'\subseteq M\).

        Iterating this procedure yields, for each ordinal \(\gamma\), an inner model \(N_\gamma
        = N_{\mathcal U,f,\gamma}\) of ZFC,
        a function \(f_\gamma\), and a \(\lambda\)-sequence \(\vec U_\gamma\) 
        enumerating the \(\kappa\)-complete
        \(N_\gamma\)-ultrafilters on ordinals less than \(\lambda\). 
        Specifically (although still somewhat 
        informally), let \(N_\gamma = L[\langle \vec U_\xi,f_\xi : \xi < \gamma\rangle]\), let
        \(f_\gamma : \lambda\to \Pbd(\lambda)\cap N_\gamma\) be 
        the increasing enumeration of \(\Pbd(\lambda)\cap N_\gamma\)
        in a wellorder similar to the one described in the first paragraph, 
        and finally define 
        \(\vec U_\gamma = \vec U_{\mathcal U,f_\gamma}\) 
        as in the first paragraph.

        The uniformity of this procedure guarantees that any model \(M\) such that
        \(N\subseteq M\) and \(\mathcal U\cap M\in M\) contains \(N_\gamma\)
        for all ordinals \(\gamma\).

        The sequence \(\langle N_\gamma : \gamma < \kappa^+\rangle\) is increasing, 
        so let \(\gamma = \gamma_{\mathcal U,f}\) be the least ordinal
        \(\alpha < \kappa^+\) such that 
        \(N_\alpha\cap \Pbd(\kappa) = N_{\alpha+1}\cap \Pbd(\kappa)\).
        We claim that \(N_{\gamma+1}\) has the \(\kappa\)-approximation property below \(\lambda\).

        Notice that \(P_\kappa(\lambda)\cap N_\gamma = P_\kappa(\lambda)\cap N_{\gamma+1}\):
        if \(\sigma\in P_\kappa(\lambda)\cap N_{\gamma+1}\), then 
        there is some \(\tau\in P_\kappa(\lambda)\cap N\)
        such that \(\sigma\subseteq \tau\); 
        then \(\sigma\in N_\gamma\) and since 
        \(\Pbd(\kappa) \cap N_{\gamma} = \Pbd(\kappa)\cap N_{\gamma+1}\),
        \(P(\tau)\cap N_{\gamma} = P(\tau)\cap N_{\gamma+1}\); the latter set contains \(\sigma\), and hence 
        \(\sigma\in N_{\gamma}\) as desired. 
        \cref{lma:appx_coding} and the definition of \(N_{\gamma+1}\), 
        every bounded subset of \(\lambda\) that is \(\kappa\)-approximated by 
        \(N_\gamma\) belongs to \(N_{\gamma+1}\). 
        Since \(P_\kappa(\lambda)\cap N_\gamma = P_\kappa(\lambda)\cap N_{\gamma+1}\),
        every subset of \(\lambda\) that is \(\kappa\)-approximated by 
        \(N_{\gamma+1}\) is \(\kappa\)-approximated by \(N_{\gamma}\).
        Thus \(N_{\gamma+1}\) has the \(\kappa\)-approximation property below \(\lambda\).

        Now for each pair \((\mathcal U,f)\) such that there is a 
        strong limit cardinal \(\lambda\) of cofinality at least \(\kappa\)
        such that \(\mathcal U\) is a 
        \(\kappa\)-complete fine ultrafilters on \(P_\kappa(\lambda)\) 
        and \(f: \lambda \to \Pbd(\lambda)\) is a surjection in \(N\), 
        let \(M_{\mathcal U,f} = N_{\mathcal U,f,\gamma}\cap H(\lambda)\),
        where \(\gamma = \gamma_{\mathcal U,f}\).
        Reiterating what we have proved above, any inner model 
        \(M\) of ZFC such that \(N\subseteq M\) 
        and \(\mathcal U\cap M\in M\) must contain \(M_{\mathcal U,f}\), and
        therefore any inner model \(M\) with the \(\kappa\)-approximation property
        contains \(\bigcup_{\mathcal U,f} M_{\mathcal U,f}\).

        Fix \(S\subseteq P_\kappa(\kappa^+)\) such that
        for a proper class of appropriate
        \(\mathcal U\) and \(f\), \(M_{\mathcal U,f}\cap P_\kappa(\kappa^+) = S\).
        Let \(C\) be the class of pairs \((\mathcal U,f)\) such that 
        \(M_{\mathcal U,f}\cap P_\kappa(\kappa^+)=S\).
        Then by the Hamkins uniqueness theorem (\cref{thm:hamkins_uniqueness}), 
        for all \(u,v\in C\), either
        \(M_u\subseteq M_v\) or \(M_v\subseteq M_u\). 
        Thus \(M  = \bigcup_{u\in C}M_u\) is an inner model of ZFC, 
        and since
        each \(M_u\) has the \(\kappa\)-approximation property below 
        \(\lambda\), \(M\) has the \(\kappa\)-approximation property.
        Finally, \(N\subseteq M\) since for all 
        but a set of \(u\in C\),
        \(N\cap H(\lambda)\subseteq M_{u}\).
    \end{proof}

The construction of the previous theorem 
is much simpler when \(N\) is an inner model that
not only has the \(\kappa\)-cover property (i.e.,
is positive for the fine filter) but also
is positive for the supercompactness filters
\(\mathcal N_{\kappa,\lambda}\), defined for all \(\kappa\leq \lambda\) as
the intersection of the \(\kappa\)-complete normal fine ultrafilters on \(P_\kappa(\lambda)\).
\begin{defn}
    An inner model \(N\) is \textit{\((\kappa,\lambda)\)-supercompact} if \(N\cap P_\kappa(\lambda)\)
    is \(\mathcal N_{\kappa,\lambda}\)-positive;
    \(N\) is \((\kappa,\infty)\)-supercompact if it is \((\kappa,\lambda)\)-supercompact
    for all cardinals \(\lambda\).
\end{defn}
Note that \(N\) is \((\kappa,\lambda)\)-supercompact if and only if \(N\cap P_\kappa(\lambda)\)
belongs to some \(\kappa\)-complete normal fine ultrafilter on \(P_\kappa(\lambda)\),
and if \(N\) is \((\kappa,\lambda)\)-supercompact for some ordinal \(\lambda\), then
\(N\) is \((\kappa,\alpha)\)-supercompact for all \(\alpha < \lambda\).
The notion bears an obvious resemblance to Woodin's weak extender models.
\begin{defn}
    An inner model \(N\) of \ZFC\ is a \textit{weak extender model of
    \(\kappa\) is \(\lambda\)-supercompact} if there is
    a \(\kappa\)-complete normal fine ultrafilter \(\mathcal U\) on \(P_\kappa(\lambda)\)
    such that \(\mathcal U\cap N\in N\) and \(P_\kappa(\lambda)\cap N\in \mathcal U\);
    \(N\) is a \textit{weak extender model of \(\kappa\) is supercompact} if it is a
    weak extender model of \(\kappa\) is \(\lambda\)-supercompact
    for all cardinals \(\lambda\).
\end{defn}
The substantive part of the following characterization of
weak extender models is due to Woodin and Usuba
independently.
\begin{lma}\label{lma:appx_wem}
    An inner model of \ZFC\ is a
    weak extender model of \(\kappa\) is supercompact
    if and only if it is
    \((\kappa,\infty)\)-supercompact and has
    the \(\kappa\)-approximation property.\qed 
\end{lma}
The following theorem shows that the \((\kappa,\infty)\)-supercompact inner models
are precisely the \({<}\kappa\)-closed inner models of weak extender models.
\begin{thm}\label{thm:positive_wem}
    If \(N\) is a 
    \((\kappa,\infty)\)-supercompact inner model of \ZFC , then
    there is a weak extender model of \(\kappa\) is supercompact
    that contains \(N\) as a \({<}\kappa\)-closed inner model.
    \begin{proof}
        The reverse direction is obvious,
        so we focus on the forwards direction.
        Let \(\lambda\) be a cardinal and
        let \(\mathcal U\) be a \(\kappa\)-complete
        normal fine ultrafilter on \(P_\kappa(\lambda)\)
        such that \(P_\kappa(\lambda)\cap N\in N\), and
        let \(j : V\to M\) be the associated ultrapower embedding.
        Let \(W = j(N)\).
        We claim \(W\) contains every subset
        \(A\) of \(\lambda\) that is \(\kappa\)-approximated by \(N\).

        To see this, note that
        since \(W\) has the \(j(\kappa)\)-cover property
        in \(M\), we can find
        \(\sigma\in P_{j(\kappa)}(j(\lambda))\) such that
        \(j[A]\subseteq \sigma\). 
        But \(j(A)\) is
        \(j(\kappa)\)-approximated by \(W\),
        so \(j(A)\cap \sigma\in W\). Since \(P_\kappa(\lambda)\cap N\in \mathcal U\),
        \(j[\lambda]\in W\), and this implies
        \(A = j^{-1}[j(A)\cap \sigma] \in W\), as desired.

        Next, a familiar argument shows that since
        \(N\cap P(\lambda)\subseteq W\), \(N\cap V_\kappa = W\cap V_\kappa\), 
        and \(N\) has the \(\kappa\)-cover property, in fact,
        \(P_\kappa(\lambda)\cap W\subseteq N\). This implies that
        \(W\) has the \(\kappa\)-approximation property below \(\lambda\).

        It follows that for any \(\kappa\)-complete normal fine ultrafilters
        \(\mathcal U\) and \(\mathcal U'\) 
        with \(P_\kappa(\lambda)\cap N\in \mathcal U\cap \mathcal U'\),
        \(j_\mathcal U(N)\cap P(\lambda) = j_{\mathcal U'}(N)\cap P(\lambda)\)
        by the preceding remarks and the Hamkins uniqueness theorem.
        Letting \(X\) be the union of \(j_\mathcal U(N)\cap P(\lambda)\)
        for all \(\mathcal U\) such that
        \(P_\kappa(\lambda)\cap N\in \mathcal U\), it follows that
        \(M = L(X)\) is a model of ZFC with the \(\kappa\)-approximation and cover
        properties and \(N\) is a \({<}\kappa\)-closed innder model
        of \(M\). Since \(N\) is \((\kappa,\infty)\)-supercompact and \(M\)
        has the \(\kappa\)-approximation property, \cref{lma:appx_wem} implies that \(N\) is a
        weak extender model of \(\kappa\) is supercompact.
    \end{proof}
\end{thm}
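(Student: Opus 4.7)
The plan is to construct, from the given $(\kappa,\infty)$-supercompact inner model $N$, an inner model $M\supseteq N$ with the $\kappa$-approximation and $\kappa$-cover properties that is itself $(\kappa,\infty)$-supercompact; then \cref{lma:appx_wem} identifies $M$ as a weak extender model of $\kappa$ is supercompact. The natural candidate is $M = L(X)$, where $X = \bigcup_\lambda X_\lambda$ and each $X_\lambda\subseteq P(\lambda)$ is canonically determined by $N$. The reverse direction of the theorem is immediate from the definitions.

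For the main construction, fix a cardinal $\lambda$ and a $\kappa$-complete normal fine ultrafilter $\mathcal U$ on $P_\kappa(\lambda)$ with $N\cap P_\kappa(\lambda)\in \mathcal U$, given by the $(\kappa,\infty)$-supercompactness of $N$; let $j = j_\mathcal U : V\to M_\mathcal U$ and $W_\mathcal U = j(N)$. I would then establish two claims: \emph{(i)} every $A\subseteq \lambda$ that is $\kappa$-approximated by $N$ lies in $W_\mathcal U$, and \emph{(ii)} $P_\kappa(\lambda)\cap W_\mathcal U \subseteq N$, whence $W_\mathcal U$ has the $\kappa$-approximation property below $\lambda$. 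For \emph{(i)}, cover $j[A]$ by some $\sigma\in W_\mathcal U$ of $W_\mathcal U$-cardinality less than $j(\kappa)$ using the $j(\kappa)$-cover of $W_\mathcal U$ in $M_\mathcal U$ (inherited by elementarity from the $\kappa$-cover of $N$); internal $j(\kappa)$-approximation of $j(A)$ in $W_\mathcal U$ then yields $j(A)\cap\sigma\in W_\mathcal U$; and because $N\cap P_\kappa(\lambda)\in \mathcal U$, the identity function on $N\cap P_\kappa(\lambda)$ represents $j[\lambda]$ in the ultrapower, so $j[\lambda]\in W_\mathcal U$ and the order-isomorphism $j\restriction\lambda$ is definable inside $W_\mathcal U$, allowing one to recover $A = j^{-1}[j(A)\cap\sigma\cap j[\lambda]]$. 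Claim \emph{(ii)} follows from a standard coincidence argument: any $\sigma\in P_\kappa(\lambda)\cap W_\mathcal U$ is $\kappa$-covered in $N$ by some $\tau$, and agreement at $V_\kappa$ between $N$ and $W_\mathcal U$ forces $\sigma\in P(\tau)\cap W_\mathcal U = P(\tau)\cap N$.

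Together \emph{(i)} and \emph{(ii)} show that $W_\mathcal U\cap P(\lambda)$ depends only on $N$, $\kappa$, and $\lambda$: any two candidates agree on $P_\kappa(\kappa^+)\cap N$ and share the $\kappa$-approximation and cover properties below $\lambda$, so \cref{thm:hamkins_uniqueness} pins them down on bounded subsets of $\lambda$. Set $X_\lambda$ equal to this common $W_\mathcal U\cap P(\lambda)$ and $M = L(\bigcup_\lambda X_\lambda)$. One checks that $M$ is an inner model of ZFC with the $\kappa$-approximation and cover properties, that $N\subseteq M$ (applying the recovery in \emph{(i)} to $A\in N$ shows $N\cap P(\lambda)\subseteq W_\mathcal U$ for each $\lambda$), and that $P_\kappa(\lambda)\cap M = P_\kappa(\lambda)\cap N$ by \emph{(ii)}, so $N$ is ${<}\kappa$-closed in $M$. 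The last identity transports the $(\kappa,\infty)$-supercompactness of $N$ across to $M$, and \cref{lma:appx_wem} then concludes that $M$ is a weak extender model of $\kappa$ is supercompact.

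The main obstacle is claim \emph{(i)}, specifically the step of reversing $j$ on ordinals below $\lambda$ inside $W_\mathcal U$. This is where the hypotheses on $\mathcal U$ do real work: normality and fineness together with $N\cap P_\kappa(\lambda)\in\mathcal U$ force $j[\lambda]\in W_\mathcal U$, so that $j\restriction\lambda$ is internally definable and $A$ can be extracted from its image; without this, the large approximation $j(A)\cap\sigma$ could not be transferred back to $V$. The remaining work is routine manipulation of cover, approximation, and the Hamkins uniqueness theorem.
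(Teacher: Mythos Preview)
Your proposal is correct and follows essentially the same route as the paper: set $W_{\mathcal U}=j_{\mathcal U}(N)$, prove your claims (i) and (ii) exactly as the paper does (cover $j[A]$ inside $W$, use $j(\kappa)$-approximation and $j[\lambda]\in W$ to pull $A$ back, then use the ``familiar argument'' for (ii)), invoke \cref{thm:hamkins_uniqueness} to make $W_{\mathcal U}\cap P(\lambda)$ canonical, and take $M=L(X)$. Your write-up is in fact slightly more explicit than the paper's in justifying $j[\lambda]\in W_{\mathcal U}$ and in spelling out (ii), and you correctly identify $M$ rather than $N$ as the weak extender model in the concluding sentence.
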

A cardinal \(\kappa\) is \textit{distributively supercompact}
if for all cardinals \(\lambda\), there is a \(\kappa\)-distributive partial order \(\mathbb P\)
such that \(\kappa\) is \(\lambda\)-supercompact in \(V^\mathbb P\).
\begin{cor}[\HOD\ Hypothesis]\label{thm:hod_wem}
    Suppose \(\kappa\) is supercompact. Then 
    there is a unique weak extender model of \(\kappa\) is supercompact
    that contains \(\HOD\) as a \({<}\kappa\)-closed inner model.
    In particular, \(\HOD\) satisfies that \(\kappa\) is distributively supercompact.\qed
\end{cor}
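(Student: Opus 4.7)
The statement decomposes into existence of a weak extender model \(M_*\) of \(\kappa\) is supercompact containing \(\HOD\) as a \({<}\kappa\)-closed inner model, uniqueness of this \(M_*\), and the distributive supercompactness of \(\kappa\) in \(\HOD\). I will take these in order.

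For existence, by \cref{thm:positive_wem} it suffices to show that \(\HOD\) is \((\kappa,\infty)\)-supercompact. Fix a cardinal \(\lambda\geq \kappa\) and a \(\lambda^+\)-supercompactness embedding \(j:V\to M\) with \(\crit(j)=\kappa\) and \(M^{\lambda^+}\subseteq M\). Using the \HOD\ hypothesis, apply \cref{thm:stationary_partitions} to fix an ordinal definable partition \(\langle S_\alpha : \alpha<\lambda^+\rangle\) of \(S^{\lambda^+}_\omega\) into stationary sets; then \(j(\langle S_\alpha : \alpha<\lambda^+\rangle)\) is an ordinal definable partition inside \(M\). The supercompact uniqueness theorem for elementary embeddings proved earlier in this chapter (the analog of \cref{thm:extendible_uniqueness} for supercompact \(\kappa\), whose statement appears just before \cref{thm:extendible_uniqueness}) implies that inside \(M\), the restriction \(j\restriction \lambda^+\) is recoverable from the ordinal parameters \(\sup j[\lambda^+]\), \(j(\lambda^+)\), and \(j(\langle S_\alpha\rangle)\), since any embedding similar to \(j\restriction V_{\bar\gamma}\) below \(j(\lambda^+)\) agrees with \(j\) on \(\lambda^+\). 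Hence \(j[\lambda^+]\in\HOD^M\), and deriving a normal fine \(\kappa\)-complete ultrafilter on \(P_\kappa(\lambda^+)\) from \(j\) using \(j[\lambda^+]\) produces one that concentrates on \(\HOD\). Thus \(\HOD\) is \((\kappa,\infty)\)-supercompact, and \cref{thm:positive_wem} delivers the desired weak extender model \(M_*\).

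For uniqueness, suppose \(M_*\) and \(M_*'\) are both weak extender models of \(\kappa\) is supercompact containing \(\HOD\) as a \({<}\kappa\)-closed inner model. Any \(\sigma\in P_\kappa(\kappa^+)\cap M_*\) admits a wellordering inside \(M_*\), yielding a \({<}\kappa\)-sequence of \(\HOD\)-ordinals in \(M_*\); \({<}\kappa\)-closedness forces this sequence, and hence \(\sigma\), into \(\HOD\). Thus \(P_\kappa(\kappa^+)\cap M_* = P_\kappa(\kappa^+)\cap \HOD = P_\kappa(\kappa^+)\cap M_*'\). Since both \(M_*\) and \(M_*'\) have the \(\kappa\)-approximation and cover properties by \cref{lma:appx_wem}, \cref{thm:hamkins_uniqueness}, applied at every cardinal of cofinality at least \(\kappa\), yields \(M_* = M_*'\).

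For distributive supercompactness, fix \(\lambda\geq \kappa\) and a normal fine \(\kappa\)-complete ultrafilter \(\mathcal U\in M_*\) on \(P_\kappa(\lambda)^{M_*}\) witnessing \(\lambda\)-supercompactness of \(\kappa\) in \(M_*\). Using Vopenka forcing relative to \(\HOD\), I would code an initial segment of \(M_*\) containing \(\mathcal U\) and \(P_\kappa(\lambda)^{M_*}\) as an \(\HOD\)-generic filter \(G\) for a set forcing \(\mathbb P\in \HOD\), obtaining \(\HOD[G]\subseteq M_*\) with \(\mathcal U\in \HOD[G]\) and \(P_\kappa(\lambda)^{\HOD[G]}\) equal to \(P_\kappa(\lambda)^{M_*}\). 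Since \(\HOD\) is \({<}\kappa\)-closed in \(M_*\) and \(\HOD[G]\subseteq M_*\), \(\HOD\) is \({<}\kappa\)-closed in \(\HOD[G]\), so \(\mathbb P\) is \(\kappa\)-distributive in \(\HOD\). In \(\HOD[G]\), \(\mathcal U\) remains a normal fine \(\kappa\)-complete ultrafilter witnessing \(\lambda\)-supercompactness of \(\kappa\), so \(\kappa\) is distributively supercompact in \(\HOD\). The principal obstacle will be this last step: carefully selecting the initial segment of \(M_*\) to code so that \(\mathbb P\) is genuinely set-sized in \(\HOD\), so that \(\HOD[G]\) captures the appropriate \(P_\kappa(\lambda)\), and so that \({<}\kappa\)-closedness transfers to yield both \(\kappa\)-distributivity and the ultrafilter's survival under restriction.
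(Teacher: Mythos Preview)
Your proposal is correct and supplies the details the paper omits: the corollary is marked with \qed\ and no proof, so the paper treats it as immediate from \cref{thm:positive_wem} once one knows \(\HOD\) is \((\kappa,\infty)\)-supercompact under the \HOD\ hypothesis. Your existence and uniqueness arguments are both fine, and the Vop\v{e}nka approach to distributive supercompactness works.

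Two remarks. First, for existence you take a detour through the uniqueness-of-embeddings theorem to get \(j[\lambda^+]\in \HOD^M\); the direct Solovay argument is cleaner and is what the paper implicitly has in mind. With \(\langle S_\alpha:\alpha<\lambda^+\rangle\in \HOD\) a partition of \(S^{\lambda^+}_\omega\) into stationary sets and \(j:V\to M\) a \(\lambda^+\)-supercompactness embedding, one has directly that \(j[\lambda^+]\) equals the set of \(\beta<j(\lambda^+)\) such that \(j(\langle S_\alpha\rangle)_\beta\) reflects to \(\sup j[\lambda^+]\); this is ordinal definable in \(M\) from \(\sup j[\lambda^+]\) and the ordinal-definable sequence \(j(\langle S_\alpha\rangle)\), so \(j[\lambda^+]\in \HOD^M\). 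No appeal to the uniqueness theorem is needed.

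Second, the obstacles you flag in the last paragraph dissolve on inspection. Coding \(\mathcal U\in M_*\) as a set of ordinals \(A\) and taking \(\mathbb P\in \HOD\) to be the Vop\v{e}nka algebra for \(A\) gives a set forcing; the generic \(G_A=\{p\in \mathbb P:A\in p\}\) is definable in \(M_*\) from \(A\) and \(\mathbb P\), so \(\HOD[G_A]\subseteq M_*\). Since \(\HOD\) is \({<}\kappa\)-closed in \(M_*\), it is \({<}\kappa\)-closed in \(\HOD[G_A]\), so \(\mathbb P\) is \(\kappa\)-distributive in \(\HOD\) and \(P_\kappa(\lambda)^{\HOD[G_A]}=P_\kappa(\lambda)^{\HOD}=P_\kappa(\lambda)^{M_*}\). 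Finally, \(\mathcal U\) remains \(\kappa\)-complete, normal, and fine in \(\HOD[G_A]\) simply because \(\HOD[G_A]\subseteq M_*\). So \(\kappa\) is \(\lambda\)-supercompact in \(\HOD[G_A]\), as required.
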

The following corollary shows that the first-order theory of HOD has an influence on
the question of whether the least supercompact cardinal is supercompact in HOD.
One could actually replace UA in the argument with any \(\Pi_2\) sentence
that implies that every countably complete
ultrafilter on an ordinal is ordinal definable.
\begin{thm}\label{thm:hod_ua}
    Suppose \(\kappa\) is a supercompact cardinal and the \(\HOD\) hypothesis holds.
    If \(V_\kappa\cap \HOD\) satisfies the Ultrapower Axiom, then \(\kappa\) is supercompact
    in \(\HOD\).
    \begin{proof}
        Let \(N\) be the inner model of \cref{thm:hod_wem}. Then \(N\) is definable
        without parameters and \(V_\kappa\cap \HOD = V_\kappa\cap N\preceq_{\Sigma_2} N\)
        since \(\kappa\) is supercompact in \(N\).
        Since UA is \(\Pi_2\), \(N\) satisfies UA. Now working in \(N\),
        we apply the following consequence of UA:
        if \(\kappa\) is supercompact and \(A\) is a set such that \(V_\kappa\subseteq \HOD_A\),
        then \(V = \HOD_A\). Therefore 
        \(N = (\HOD_A)^N\) for any set of ordinals
        \(A\in N\) such that \(V_\kappa\cap N\subseteq (\HOD_A)^N\). 
        Let \(A\in \HOD\) be a set of ordinals
        such that \(V_\kappa\cap \HOD\subseteq L[A]\).
        Then \(A\in N\) and \(N = (\HOD_A)^N\),
        so there is a wellorder of \(N\) definable over \(N\) from \(A\).
        Since \(N\) is definable without parameters and \(A\) is ordinal definable,
        this wellorder of \(N\) is definable from an ordinal parameter.
        Any ordinal definably wellordered transitive class is contained in \(\HOD\),
        so \(N\subseteq \HOD\). Therefore \(\HOD = N\), 
        and so \(\kappa\) is supercompact in \(\HOD\).
    \end{proof}
\end{thm}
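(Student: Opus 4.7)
The plan is to apply \cref{thm:hod_wem} to obtain the canonical weak extender model $N$ of ``$\kappa$ is supercompact'' that contains $\HOD$ as a ${<}\kappa$-closed inner model, then show that UA propagates from $V_\kappa \cap \HOD$ up to all of $N$, and finally use the consequences of UA inside $N$ to force $N \subseteq \HOD$.

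The first step is to observe that since $\HOD$ is ${<}\kappa$-closed in $N$, we have $V_\kappa \cap N = V_\kappa \cap \HOD$. Next, because $\kappa$ is supercompact in $N$, the Magidor formulation of supercompactness applied inside $N$ gives $V_\kappa^N \preceq_{\Sigma_2} N$. The Ultrapower Axiom is a $\Pi_2$ statement (its only unbounded quantifier asserts the existence of a comparison of ultrapowers), so the hypothesis that UA holds in $V_\kappa \cap \HOD = V_\kappa^N$ lifts to give $N \models \text{UA}$.

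Working inside $N$, I would then invoke the following consequence of UA at a supercompact cardinal: if $\kappa$ is supercompact and $A$ is a set of ordinals with $V_\kappa \subseteq \HOD_A$, then $V = \HOD_A$. To apply this, choose $A \in \HOD$ to be a set of ordinals coding $V_\kappa \cap \HOD = V_\kappa \cap N$, so that $V_\kappa \cap N \subseteq L[A] \subseteq (\HOD_A)^N$. The UA consequence, applied inside $N$, then yields $N = (\HOD_A)^N$, so $N$ carries a wellorder that is definable over $N$ from the parameter $A$.

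To conclude, I would exploit two facts: $N$ is definable in $V$ without parameters (this is the uniqueness clause in \cref{thm:hod_wem}), and $A$ is ordinal definable in $V$. Consequently the canonical wellorder of $(\HOD_A)^N = N$ is ordinal definable in $V$, so every element of $N$ is ordinal definable in $V$, giving $N \subseteq \HOD$, hence $N = \HOD$, and therefore $\kappa$ is supercompact in $\HOD$. The step I expect to require the most care is the propagation of UA from $V_\kappa \cap \HOD$ to $N$: one must check that UA really is $\Pi_2$ over $V$ (rather than merely $\Pi_2$ over some larger fragment), and that the $\Sigma_2$-elementarity coming from supercompactness of $\kappa$ in $N$ is enough to absorb the quantifier complexity. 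Once that is secured, everything else reduces to invoking \cref{thm:hod_wem} and the quoted $\HOD_A$-consequence of UA.
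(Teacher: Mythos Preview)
Your proposal is correct and follows essentially the same argument as the paper: obtain the parameter-free definable weak extender model \(N\) from \cref{thm:hod_wem}, use \(V_\kappa\cap N = V_\kappa\cap \HOD \preceq_{\Sigma_2} N\) together with the \(\Pi_2\)-complexity of UA to lift UA to \(N\), apply the UA consequence \(V = \HOD_A\) inside \(N\) for a suitable \(A\in \HOD\), and conclude \(N\subseteq \HOD\) from the resulting ordinal-definable wellorder. Your identification of the \(\Sigma_2\)-reflection step as the one requiring care matches the paper's implicit emphasis, and your added remarks (the \({<}\kappa\)-closure giving \(V_\kappa\cap N = V_\kappa\cap \HOD\), the uniqueness clause yielding parameter-free definability of \(N\)) merely make explicit what the paper leaves tacit.
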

\bibliographystyle{plain}
\bibliography{Bibliography}
\end{document}